\newcommand{\imod}[1]{\allowbreak\mkern4mu({\operator@font mod}\,\,#1)}
\theoremstyle{plain}
\newtheorem{thm}{\protect\theoremname}[section]
\theoremstyle{definition}
\newtheorem{defn}[thm]{\protect\definitionname}
\theoremstyle{plain}
\newtheorem{cor}[thm]{\protect\corollaryname}
\theoremstyle{plain}
\newtheorem{lem}[thm]{\protect\lemmaname}
\theoremstyle{plain}
\newtheorem{prop}[thm]{\protect\propositionname}
\theoremstyle{remark}
\newtheorem*{rem*}{\protect\remarkname}
\date{}
\theoremstyle{plain}
\newtheorem{mainthm}{Theorem}
\newcommand{\irr}{\text{Irr}(G)}
\newcommand{\ff}[1][q]{\mathbb{F}_{#1}}
\providecommand{\corollaryname}{Corollary}
\providecommand{\definitionname}{Definition}
\providecommand{\lemmaname}{Lemma}
\providecommand{\propositionname}{Proposition}
\providecommand{\remarkname}{Remark}
\providecommand{\theoremname}{Theorem}
\begin{document}
	
	\author{Alexander J. Malcolm}
	
		\address{Alexander J. Malcolm, School of Mathematics, University of Bristol, Bristol, BS8 1UG, UK, and the Heilbronn Institute for Mathematical Research, Bristol, UK.}
	\email{alex.malcolm@bristol.ac.uk}
	\title{On the $p$-width of finite simple groups}
	
	\subjclass[2010]{Primary 20D06; Secondary 20C33, 20D08 }
	%\keywords{...}
	\begin{abstract}
	In this paper we measure how efficiently a finite simple group $G$ is generated by its elements of order $p$, where $p$ is a fixed prime.  This measure, known as the $p$-width of $G$,  is the minimal $k\in \mathbb{N}$ such that any $g\in G$ can be written as a product of at most $k$ elements of order $p$. Using primarily character theoretic methods, we sharply bound the $p$-width of some low rank families of Lie type groups, as well as the simple alternating and sporadic groups.
	
	\end{abstract}
%	\begin{abstract}
%	We present the first step in classifying the $p$-width of non-abelian finite simple groups for odd primes $p$. Recall that for a finite group $G$ with order divisible by prime $p$, we say that the $p$-width of $G$ is the minimal integer $k$ such that every element of $G$ is a product of at most $k$ elements of order $p$. Note that this is more restrictive than the case of $p$-elements, that is elements of order $p^l$ for some $l$.
%	In this work we focus on the natural first cases: the alternating groups, the sporadic groups, and the simple  lie type groups of rank 1. These base cases will be essential for the more difficult project of all finite simple groups where an inductive involvement involving low rank subgroups is likely to be necessary. See for example, \cite{AJM18} where this method is used to solve the involution width problem.
%	\end{abstract}
%	

	\date{\today}
	\maketitle
\section{Introduction}
The generation of finite simple groups has been a thriving area of research for many years. Since it was established that every finite simple group is generated by a pair of elements (Steinberg \cite{Stein} and Aschbacher-Guralnick \cite{AG}) many interesting refinements have been investigated: for instance determining the existence of generators with prescribed orders (see \cite{b_survey} and the references therein). 

More recently the notion of width has provided an additional perspective with which to view generation, giving a measure of how efficiently a given subset generates the group. More formally, suppose $G$ is a finite group generated by a subset $S$,  and for $g\in G$, let $w(g)=\min \{k \mid g=s_1\cdots s_k,\,s_i \in S\}$. Then the width is defined
\[
\text{width}(G,S)=\max_{g\in G} w(g).
\]
Equivalently, $\text{width}(G,S)$ is the diameter of $\Gamma(G,S)$, the Cayley graph of $G$ with respect to $S$. 
%In essence, $\text{width(G,S)}$ is a measure of the speed with which $S$ generates the group.
Now if $G$ is a non-abelian finite simple group then any nontrivial normal subset $S$ will generate $G$, and a number of interesting cases have been considered in the recent literature. For example, \cite{ore} settles the long-standing conjecture of Ore that every element is a commutator (i.e. the \emph{commutator width} is one). Furthermore it is shown in  \cite{squares} that $G$ has width two with respect to its set of squares.

In this work we consider the width of (non-abelian) finite simple groups with respect to elements of a fixed prime order: suppose that a prime $p$ divides $\vert G \vert$ and denote the set of order $p$ elements by $I_p(G)$. Then we consider $w_p(G):=\text{width}(G,I_p(G))$ and call this the \emph{$p$-width} of $G$.

The case where $p=2$ (also known as the \emph{involution width}) has received considerable attention in previous work. For example there exists a classification of the so-called strongly real finite simple groups i.e. those such that $w_2(G)=2$ (see \cite{GV} and the references therein). In fact $w_2(G)\leq 4$ for all finite simple groups \cite[Thm. 1]{AJM18} and one can show that this bound is sharp for a number of infinite families (for example $PSL_n(q)$ such that $n,q\geq 6$ and $\gcd(n,q-1)=1$ \cite{KN}).

In contrast, existing work on the $p$-width for odd primes is very limited. In \cite{GT}, the larger generating set $P$ of all $p$-elements is considered and it's shown that $\text{width}(G,P)\leq 70$ for all finite simple groups $G$ \cite[3.8]{GT}. We show that this bound can be reduced significantly in many cases (even on restriction to only prime order elements).

%In this work we consider the $p$-width of finite simple groups, that is $\text{width(G,I_p(G))}$ where $I_p(G)=$ with respect to set of prime order elements in finite simple groups: suppose that $p$ divides then  

%Any subset of elements closed under conjugation will generate a non-abelian finite simple group 
%The width of finite simple groups with respect to a number of interesting generating sets has been considered in the recent literature. For example
%A number of interesting questions concerning the width of finite simple groups have been addressed in the recent literature. For example \cite{Ore} settles the longstanding conjecture of Ore 

\begin{mainthm}\label{thm:MainThm}
	Let $G$ be a finite simple group that is either an alternating, sporadic or rank 1 group of Lie type, and let $p$ be an odd prime dividing $\vert G \vert$. Then $w_p(G)\leq 3$.
\end{mainthm}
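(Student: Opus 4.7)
The central tool is Frobenius' character formula: for a conjugacy class $C \subseteq G$ of elements of order $p$ and $s \in C$, the number of ordered triples $(x,y,z) \in C^3$ with $xyz = g$ equals
\[
\frac{|C|^3}{|G|}\left(1 + \sum_{\chi \neq 1} \frac{\chi(s)^3\,\overline{\chi(g)}}{\chi(1)^2}\right),
\]
where the sum runs over the non-trivial irreducible characters of $G$. To deduce $w_p(G) \leq 3$ it suffices to exhibit, for each pair $(G,p)$, a class $C$ making this count strictly positive for every $g \in G$; the standard sufficient condition is the ``Thompson--Witt'' inequality $\sum_{\chi \neq 1}|\chi(s)|^3/\chi(1) < 1$. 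When no single class works, the formula generalises to a product $C_1 C_2 C_3$ of distinct order-$p$ classes, giving additional flexibility.

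\textbf{Sporadic groups.} Here the problem is a finite verification. For each of the $26$ sporadic groups and each odd prime $p$ dividing $|G|$, I would iterate through the order-$p$ classes of $G$, reading off character values from the ATLAS / GAP character tables, first checking the Thompson--Witt bound and, when it fails, computing the full sum above against each conjugacy class of $g$ (and, if necessary, passing to mixed products of order-$p$ classes).

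\textbf{Rank 1 Lie type.} Here $G$ is one of $PSL_2(q)$, $PSU_3(q)$, ${}^2B_2(q)$ or ${}^2G_2(q)$, all with well-known explicit character tables (Frobenius, Ennola--Simpson--Frame, Suzuki, Ward). I would split on whether $p$ is the defining characteristic: if so, take $s$ regular unipotent; if not, take $s$ inside a maximal torus of largest order containing an element of order $p$. In either case the explicit tables show $|\chi(s)|$ is bounded by a small absolute constant for all but a short list of exceptional $\chi$, while the character degrees grow polynomially in $q$, so $\sum|\chi(s)|^3/\chi(1)$ is an explicit rational function of $q$ that drops below $1$ for $q \geq q_0$; the finitely many remaining small $q$ reduce to direct GAP calculations.

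\textbf{Alternating groups and main obstacle.} For $G = A_n$ with $p \leq n$ odd, take $C$ to be the class of permutations of cycle type $(p^m, 1^{n-pm})$ with $m = \lfloor n/p \rfloor$, i.e.\ order-$p$ elements with the largest possible support. Using the Murnaghan--Nakayama rule and Fomin--Lulov type bounds, which relate $|\chi^\lambda(s)|$ to $\chi^\lambda(1)^{1/p}$, together with hook-length lower bounds on the degrees, one obtains $\sum|\chi(s)|^3/\chi(1) \to 0$ as $n \to \infty$; the Thompson--Witt inequality then holds past an explicit threshold in $n$, with the small remaining cases handled directly. The principal obstacle throughout is the rank 1 Lie type case at small $q$ with non-defining $p$ --- in particular $PSL_2(q)$ --- where a Steinberg or exceptional cuspidal character may have $|\chi(s)|/\chi(1)$ close to $1$, so the uniform bound fails. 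In such cases one must exploit the actual value of $\chi(g)$ (rather than the trivial bound $|\chi(g)| \leq \chi(1)$) or switch to a mixed-class product of three distinct order-$p$ classes.
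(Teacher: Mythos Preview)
Your treatment of the sporadic groups is exactly the paper's: compute structure constants from the GAP character tables. For the rank~1 groups your plan is also character-theoretic and would go through, though the paper economises by quoting the Guralnick--Malle result that $G\setminus\{1\}\subseteq C^2$ already for most semisimple classes $C$ of order $>2$ (so one gets $w_p(G)=2$, not merely $\leq 3$), together with Orevkov's explicit class-multiplication tables for $PSU_3(q)$; only a handful of residual cases (e.g.\ $p=3$ in ${}^2G_2(q)$, or $p\mid q+1$ in $PSL_2(2^f)$) need the hand estimate you describe.

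The real gap is in the alternating case. Your argument is asymptotic in $n$ for \emph{fixed} $p$: Fomin--Lulov / Larsen--Shalev bounds give $\sum_{\chi\neq 1}|\chi(s)|^3/\chi(1)<1$ once $n$ exceeds some threshold $N(p)$, and you propose to clear the ``small $n$'' directly. But the theorem quantifies over all pairs $(n,p)$ with $p$ an odd prime and $p\leq n$, and the threshold $N(p)$ genuinely grows with $p$ (when $n$ is close to $p$ the only order-$p$ class is a single $p$-cycle, the support is small relative to $n$, and the character-ratio bounds are weak). The residual set $\{(n,p):p\leq n<N(p)\}$ is therefore infinite and cannot be disposed of by a finite GAP check; you would need a separate uniform argument for it, which your proposal does not supply.

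The paper sidesteps this entirely by invoking Dvir's combinatorial covering theorem: if $C\subset A_n$ contains no $2^k$-cycle for $k>1$ and $r(C):=|\mu(C)|-c^*(C)\geq (n-1)/2$, then $C^3=A_n$. Taking $C$ of cycle type $(p^m,1^{n-mp})$ with $m=\lfloor n/p\rfloor$ gives $r(C)=m(p-1)$, and an elementary inequality shows $m(p-1)\geq (n-1)/2$ for every $n\geq 5$ and odd $p\leq n$, so one line finishes the proof uniformly. Your Larsen--Shalev route does recover the paper's separate asymptotic statement that $w_p(A_n)=2$ for $n$ sufficiently large (depending on $p$), but not the uniform bound $w_p(A_n)\leq 3$ that the theorem asserts.
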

We give more precise results for almost all groups under consideration. For example, in Sections \ref{sec:LieType} and \ref{sec:sporadic} we describe the exact $p$-width for all rank 1 groups of Lie type and sporadic groups. In most of these cases, the $p$-width of $G$ is actually 2 (see Propositions \ref{prop:PSL2 p width}--\ref{prop:ree} and Theorem \ref{thm: sporadic p width}). Nonetheless, Theorem \ref{thm:MainThm} is sharp: in Section \ref{sec:Alt} we describe a family of alternating groups of $p$-width 3, for any prime $p \neq 3$. Furthermore,  $w_3(PSU_3(q))=3$ if and only if $q\equiv 2 \mod 3$ (see Prop. \ref{prop:PSU3} below).

It is interesting to compare Theorem \ref{thm:MainThm} to the case where $p=2$. Whereas $w_2(G)=2$ for a sporadic simple group $G$ if and only if $G\in \{J_1, J_2\}$, $w_p(G)=2$ for almost all pairs of odd $p$ and sporadic $G$. In fact, $w_p(G)\leq w_2(G)$ for all odd primes $p$, and all groups considered in Theorem \ref{thm:MainThm}, with the exception of $(G,p)=(PSL_2(2^{2l+1}),3)$. 

In \cite{AJM18}, one technique for bounding the involution width of $G$ is to reduce to a collection of  Lie-type subgroups of low rank where the result is known. We expect that similar methods will be important in classifying the $p$-width of all finite simple groups, and that Theorem \ref{thm:MainThm} will be useful in this endeavour. 

Character-theoretic tools are also important in examining problems of width (see Section \ref{sec:LieType} below), but complete character tables are unknown for groups of Lie type of rank greater than 7. We expect this to be substantially more restrictive in finding sharp bounds for the $p$-width for odd primes than for $p=2$. 

Now for any prime $p$, there exists a constant $c$ (independent of $p$) such that $w_p(G)\leq c$ for all finite simple groups $G$ with $I_p(G)\neq \emptyset$  \cite[Cor. 1.5]{Liebeck and Shalev}. Theorem \ref{thm:MainThm} therefore motivates a number of natural questions: 

\textbf{Question 1:} For a given odd $p$, do there exist groups of Lie type of unbounded rank and $p$-width three?

\textbf{Question 2:} Does there exist any finite simple group $G$ and odd prime $p$ such that $w_p(G)>3$? 

We address the former question in Section \ref{sec:final} where we consider classical groups and a connection with Artin's conjecture, while the latter question will be examined in future work. Computational evidence and the results of this paper suggest a negative answer to Question 2.

\textbf{Remarks on notation and structure of the paper:}
Throughout, $p$ will denote a fixed prime divisor of $\vert G \vert$. The $p$-width of an element $g\in G$, denoted $w_p(g)$, is then the minimal $k\in \mathbb{N}$ such that $g\in I_p(G)^k =\{x_1\cdots x_k\,\vert \, x_i\in I_p(G)\}$. It follows that $w_p(G)=\max_{g\in G}w_p(g).$ Lastly, we denote the $G$-conjugacy class of an element $g$ by $g^G$.

In Section \ref{sec:Alt} we bound the $p$-width of the alternating groups $A_n$, using existing work of Dvir \cite{Dvir} and Bertram \cite{Ber72}, as well as a careful consideration of the case $p=3$ (see Lemma \ref{lem:An p=3}). Furthermore, for fixed $p$ and increasing $n$, we examine the asymptotic behaviour of  $w_p(A_n)$ using work of Larsen and Shalev \cite{LaSh}.  The bulk of this paper is then dedicated to the rank 1 Lie type groups and the sporadic groups, in Sections \ref{sec:LieType} and \ref{sec:sporadic} respectively. Here we translate a study of products of conjugacy classes in $I_p(G)$ into a question concerning irreducible characters via a classical result of Frobenius (see Theorem \ref{thm:cmc}). We then utilise known character tables of these groups to sharply bound the $p$-width. Finally,  in Section \ref{sec:final} we remark on groups of Lie type of unbounded rank.

\section{Alternating groups}\label{sec:Alt}
In this Section we consider the simple alternating groups $A_n, n\geq 5$. Our first result Theorem \ref{thm:An}, bounds the $p$-width for any choice of $n$ and $p$. However for a fixed odd prime $p$, we also study the asymptotic behaviour of the $p$-width, showing in Theorem \ref{thm:alternating-asymptotic} that $w_p(A_n)=2$ for all but finitely many values of $n$.

\begin{thm}\label{thm:An}
	Fix a prime $p$. The $p$-width of $A_n, (n\geq p)$ is at most three.
\end{thm}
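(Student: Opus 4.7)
The plan is to split the range $n\ge p$ into cases depending on the size of $p$ relative to $n$, and then combine the combinatorial covering theorems of Bertram and Dvir with a separate argument for the small-prime case $p=3$.

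\textbf{Case: $p$ odd and $p>n/2$.} In this regime the only elements of $A_n$ of order $p$ are the $p$-cycles, since two disjoint $p$-cycles would require $2p>n$ points. Bertram's theorem \cite{Ber72} then asserts that every element of $A_n$ is a product of two $p$-cycles (for $p$ in the range $\lceil n/2\rceil \le p\le n$), which gives $w_p(A_n)\le 2$ immediately.

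\textbf{Case: $p$ odd, $p\ge 5$, and $p\le n/2$.} Here elements of order $p$ in $A_n$ can carry up to $\lfloor n/p\rfloor$ disjoint $p$-cycles, providing much more flexibility. I would take $C$ to be the $S_n$-class (or $A_n$-class) of an order-$p$ element with the maximum number of $p$-cycles, and thus with the smallest centralizer and largest class. Dvir's criterion \cite{Dvir} characterizes which cycle types appear in the class product $C\cdot C$ in terms of a combinatorial condition on Young diagrams; because $C$ has large support, this bound shows $C\cdot C$ already contains all but a small, explicitly described family of cycle types. These exceptional classes can then be absorbed by multiplying by a further element of order $p$: an inductive or direct check shows $C\cdot C\cdot C$ exhausts $A_n$, giving $w_p(A_n)\le 3$.

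\textbf{Case: $p=3$.} This is where I expect the real difficulty to lie, since $3$-cycles have support of size only $3$ and Dvir's criterion provides correspondingly weaker control. I would isolate this in the separate Lemma \ref{lem:An p=3}, and prove it by an explicit combinatorial construction: reduce to a small list of cycle-type cases for the target $g\in A_n$ (distinguished by the number and parity of cycles), and in each case write $g$ by hand as a product of three carefully chosen elements of order $3$, possibly after a preliminary conjugation to a convenient representative.

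\textbf{Case: $p=2$.} This drops out of the general bound on involution width of alternating groups (subsumed by the bound $w_2(G)\le 4$ of \cite{AJM18}, together with a cycle-by-cycle improvement to $3$ in the alternating case).

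The main obstacle will be the $p=3$ case: in the other regimes the conclusion follows cleanly from Bertram or Dvir, but for $p=3$ the combinatorial flexibility is smallest, Dvir's square-covering leaves too many exceptional classes, and one is forced into a direct construction. A secondary technicality will be verifying the exceptional cycle types in the Dvir step for small $p$; these boundary cases (e.g. $n$ just above $p$, or $n$ just above $2p$) are where $C\cdot C$ fails to cover $A_n$ and the third factor is genuinely needed.
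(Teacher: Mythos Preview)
Your proposal contains a genuine error in the first case, and more broadly the case split is unnecessary and inverts where the difficulty lies.

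The error: Bertram's theorem does \emph{not} assert that every element of $A_n$ is a product of two $l$-cycles for the full range $\lceil n/2\rceil \le l\le n$; the correct threshold is roughly $l\ge 3n/4$. Indeed the paper notes immediately after the statement of Theorem~\ref{thm:An} that for $p\ge 5$ one has $A_n\neq I_p(A_n)^2$ whenever $(4p+3)/3<n<2p$. This non-empty interval sits inside your ``$p>n/2$'' case and shows the $p$-width there is exactly $3$, so your appeal to Bertram leaves a gap precisely where the bound is sharp. You would need a separate argument to cover it.

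The paper's route is both simpler and uniform: it handles all odd $p\ge 3$ in one stroke via a cube (not square) theorem of Dvir, namely that if $C\subset A_n$ has no $2^k$-cycles for $k>1$ and $r(C)\ge (n-1)/2$, then $C^3=A_n$. Taking $C$ of cycle type $(p^k,1^{n-kp})$ with $k=\lfloor n/p\rfloor$, one has $r(C)=k(p-1)$, and a short inequality (Corollary~\ref{cor:Dvir cor}) shows $r(C)<(n-1)/2$ would force $n\ge (k+1)p$, contradicting maximality of $k$. Hence $C^3=A_n$ directly: no analysis of $C^2$, no exceptional classes to absorb, and no split between large and small $p$. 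In particular $p=3$ is not the hard case for Theorem~\ref{thm:An}; Dvir's criterion covers it like any other odd prime. The separate Lemma~\ref{lem:An p=3} is not used in the proof of Theorem~\ref{thm:An} at all---it establishes the stronger fact $w_3(A_n)=2$.
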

The bound in Theorem \ref{thm:An} is
sharp for each choice of $p\neq 3$:  for example, if $p=2$ then all but the strongly real groups $A_{5},A_{6},A_{10},A_{14}$
have $2$-width three \cite[2.3]{AJM18}. Furthermore, if $p\geq5$ then \cite[Thm. 1 and 2]{Ber72} imply that $A_n\neq I_p(A_n)^2$ for the non-empty range $(4p+3)/3<n<2p$. The case $p=3$ is an interesting exception which we examine separately in Lemma \ref{lem:An p=3}.

As remarked earlier, the involution width of finite simple groups is well understood and \cite[2.3]{AJM18} proves Theorem \ref{thm:An} in the case $p=2$. We therefore restrict our attention to odd primes. But we note that this has also appeared in the literature before: the $p$-width of the alternating groups is studied by this author in the unpublished note \cite{AJMp}, and Theorem \ref{thm:An} is proved there by direct combinatorial methods.  Here we present a greatly simplified proof of Theorem \ref{thm:An} using existing work of Dvir \cite{Dvir}.

First let's establish some notation. 

	\begin{defn}\label{def:Initial definitions of An}Let $g\in S_{n}.$ Denote the
			support of $g$ by
			$\mu(g)$, the number of non-trivial cycles in the decomposition of $g$ by $c^{*}(g)$ and the number of fixed points by $\text{fix}(g)$. Lastly, set $r(g):=|\mu(g)|-c^{*}(g).$  This notation extends naturally to conjugacy classes.
\end{defn}

Before considering the general case, we restrict our attention to $p=3$.

\begin{lem}\label{lem:An p=3}
	The $3$-width of $A_n$, $n\geq 5$ is two. 
\end{lem}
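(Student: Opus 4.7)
The plan is to apply Dvir's combinatorial criterion for products of conjugacy classes in $S_n$ \cite{Dvir}, the same tool underlying the proof of Theorem \ref{thm:An}. Every element of order $3$ in $A_n$ has cycle type $\lambda_k = (3^k, 1^{n-3k})$ for some $1 \leq k \leq \lfloor n/3 \rfloor$, and no such type has all distinct odd parts when $n \geq 5$. Consequently each $S_n$-class $C_k$ of type $\lambda_k$ remains a single $A_n$-class, with $I_3(A_n) = \bigsqcup_{k=1}^{\lfloor n/3 \rfloor} C_k$. In the notation of Definition \ref{def:Initial definitions of An}, $r(C_k) = 2k$ and $|\mu(C_k)| = 3k$.

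For a target $g \in A_n$ I set $k = \lfloor n/3 \rfloor$ and aim to deduce $g \in C_k \cdot C_k$ from Dvir's criterion. The three gross necessary conditions reduce to: (i) the reduced-length bound $r(g) \leq 2 r(C_k) = 4k$, which follows from $r(g) \leq n-1$ and the elementary inequality $4 \lfloor n/3 \rfloor \geq n-1$ valid for $n \geq 5$; (ii) the parity condition $r(g) \equiv 2 r(C_k) \pmod{2}$, automatic because $g \in A_n$ forces $r(g)$ to be even; and (iii) the support bound $|\mu(g)| \leq 2 |\mu(C_k)| = 6k$, immediate from $|\mu(g)| \leq n \leq 6 \lfloor n/3 \rfloor$ for $n \geq 5$. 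In borderline situations where $k = \lfloor n/3 \rfloor$ is not quite sufficient, I would adjust by varying $k$ downward, taking advantage of the fact that $I_3(A_n)$ contains the full family of classes $C_1, \ldots, C_{\lfloor n/3 \rfloor}$.

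The main obstacle is the finer combinatorial content in Dvir's criterion beyond these three gross inequalities: the cycle structure of $g$ must embed compatibly into the joint support of two permutations of type $\lambda_k$. I expect extremal targets to require the most care, especially $n$-cycles in $A_n$ for $n$ odd, where $r(g) = n-1$ saturates the bound; explicit decompositions such as $(1\,2\,3\,4\,5) = (1\,2\,3)(3\,4\,5)$ suggest the general pattern. The base cases $n \in \{5, 6, 7, 8\}$ I would verify by direct hand computation. The overall argument yields the sharper width bound of $2$ (rather than $3$) precisely because $p = 3$ affords maximal flexibility: order-$3$ elements can densely cover the ground set with arbitrarily many disjoint $3$-cycles, leaving enough room to realise every even cycle structure as a product of two of them.
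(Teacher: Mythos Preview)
Your route via Dvir's criteria is genuinely different from the paper's, but the proposal as written has a real gap. The three conditions you verify --- $r(g)\le 2r(C_k)$, the parity match, and $|\mu(g)|\le 2|\mu(C_k)|$ --- are \emph{necessary} for $g\in C_k^2$, not sufficient; you say as much when you call the ``finer combinatorial content'' of Dvir's criterion the main obstacle, and then leave it unresolved. You never name a specific theorem of Dvir that supplies sufficiency, nor check its hypotheses for the class $C_k$ with $k=\lfloor n/3\rfloor$. The only result of Dvir actually quoted in the paper (Theorem~\ref{thm: Dvir Thm}) concerns $C^3$, and is used precisely to obtain the weaker bound $w_p(A_n)\le 3$ for general $p$; it says nothing about $C^2$. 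Your fallback of ``varying $k$ downward'' is also only a heuristic: decreasing $k$ makes both gross inequalities \emph{harder} to satisfy, so it is unclear how this would rescue the extremal cases (odd $n$-cycles) you flag.

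The paper's proof of the lemma avoids Dvir entirely and is an explicit construction. For an odd $l$-cycle ($l\ge 5$) one iteratively ``breaks off'' a $3$-cycle, alternately from the left end and the right end, collecting the disjoint $3$-cycles produced on each side into two elements $x,y\in I_3(A_n)$ with $g=xy$. For a product of two disjoint even-length cycles the same process leaves a residual pair of transpositions in the middle, which is then absorbed via the identity $(a_1,a_1{+}1)(a_2,a_2{+}1)=(a_1,a_2{+}1,a_2)(a_1,a_2{+}1,a_1{+}1)$, with the supports arranged so that each new $3$-cycle is disjoint from the appropriate side. Every element of $A_n$ is a disjoint product of such pieces together with some $3$-cycles, giving width two. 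Note that in this construction $x$ and $y$ need not be conjugate: for instance a $7$-cycle is expressed as a product of an element of type $(3^2,1)$ with one of type $(3,1^4)$. So the paper is really proving $g\in I_3(A_n)^2$ rather than $g\in C_k^2$ for a single $k$ --- which is all the lemma requires, and sidesteps the finer class-by-class analysis you would need to carry out.
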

	\begin{proof}
	Firstly observe that we can decompose an  $l$-cycle ($l\geq 4$), into a product of a $3$-cycle and  an $(l-2)$-cycle, in the following two ways:
	\begin{equation}\label{eq:bsl}
	(1,\dots,l)=(1,2,3)(4,\dots,l,1) 
	\end{equation}
\begin{equation}\label{eq:bsr}
	(1,\dots,l)=(l-2,1,2,\dots,{l-3})(1,l-1,l).
\end{equation}
We refer to the decomposition (\ref{eq:bsl}) as "breaking and permuting left" as we have broken off a $3$-cycle from our initial $l$-cycle, and then permuted  the entries in the remaining $(l-2)$-cycle one step to the left. Similarly, we refer to (\ref{eq:bsr}) as "breaking and permuting right".
In particular after breaking and permuting left and then right we produce 
\[
(1,\dots,l)=(1,2,3)\cdot (l-1,4,\dots, l-2)\cdot (4,l,1).
\]
Now suppose that $l\geq 5$ is odd. It is clear from the above that we can write $g=(1,\dots,l)$ as a product in $I_3(G)^2$ by progressively breaking and permuting left then right, and collecting the product of disjoint $3$-cycles on either side. As $l$ is odd, the middle cycle will reduce in length by 2 at each step, reducing to a $3$-cycle itself.  In particular, letting $m=\lfloor(l-1)/2 \rfloor$, we write $g$ as the following product  of $m$ $3$-cycles:

\[
(1,\dots,l)=\prod_{i=0}^{\lceil m/2\rceil-1}(l+1-2i,2i+2, 2i+3) \cdot \prod_{i=0}^{\lfloor m/2\rfloor -1}(4+2i,l-2i,l-2i+1),
\]
where $l+1$ is to be read as $1$. We denote this product by $(1,\dots,l)=x\cdot y$.

For example, if $l=11$, then breaking and permuting three times to left, and twice to the right yields
\begin{eqnarray*}
(1,\dots,11) &=& (1,2,3)\cdot(10,4,\dots,9)\cdot (4,11,1) \\
&=& (1,2,3)(10,4,5)(8,6,7)\cdot (6,9,10)(4,11,1).
\end{eqnarray*}

To check that $x\in I_3(G)$ it suffices to show that  $\vert \mu(x)\vert=3 \lceil m/2\rceil$. Well suppose for a contradiction that there exists $i\leq j \leq \lceil m/2\rceil-1$ such that $l+1-2i = 2j+3$. Rearranging then gives that $l\leq 4j+2$. But
\[
4j+2\leq 4(\lceil m/2\rceil-1)+2\leq 2m\leq l-1,
\] 
giving us our contradiction. The remaining case follows similarly, as does the proof that $y\in I_3(G)$. Hence any cycle of odd length $l\geq 5$  has $3$-width two. 
 
Next consider the product of two disjoint cycles $g_1g_2$ of even lengths $l_1$ and $l_2$. Firstly  we apply the same process as above to $g_1$, breaking and permuting on the left then right until we have broken off $(l_1/2-1)$ $3$-cycles and a $2$-cycle remains in the middle. In particular letting  $l_1/2-1=m_1$ then
\begin{eqnarray*}
(1,\dots,l_1) &=&\prod_{i=0}^{\lceil m_1/2\rceil -1}(l_1+1-2i,2i+2, 2i+3) \\ & & \cdot (2 \lfloor (m_1+1)/2 \rfloor+2, 2 \lfloor (m_1+1)/2 \ \rfloor+3  ) \\ & & \cdot \prod_{i=0}^{\lfloor m_1/2\rfloor -1}(4+2i,l_1-2i,l_1-2i+1).
\end{eqnarray*}
For ease we relabel this product $(1,\dots,l_1)=x_1\cdot (a_1,a_1+1) \cdot y_1$ and note that 
if $m_1$ is odd, then $a_1 \notin \mu(x_1)\cup \mu(y_1)$ but $a_1+1\in \mu(x_1)\backslash \mu(y_1)$. On the other hand, when $m_1$ is even,  $a_1+1 \notin \mu(x_1)\cup \mu(y_1)$ but $a_1\in \mu(y_1)\backslash \mu(x_1)$. In other words, the set $\{a_1,a_1+1\}$ intersects non-trivially with the support of exactly one of $x_1$ or $y_1$ and this intersection has size one. Furthermore, the non-trivial intersection is determined by whether our final step was to break and permute to the left or to the right (determined by the parity of $m_1$).

Now we constructed this decomposition of $(1,\dots,l_1)$ by breaking to the left as the first step. But alternatively, breaking right first would produce a similar decomposition $(1,\dots,l_1)=x_1'(a_1',a_1'+1)y_1'$ such that $x_1',y_1'\in I_3(G)$ -- we leave the details of this to the reader. Crucially, this would then reverse which of $x_1'$ on the left, or $y_1'$ on the right, intersects non-trivially with the central $2$-cycle.

Applying this process to $g_1g_2$ (and recalling that $\mu(g_1)\cap \mu(g_2) = \emptyset$) yields
\begin{eqnarray*}
g_1g_2&=&x_1(a_1,a_1+1)y_1\cdot x_2(a_2,a_2+1)y_2 \\ &=&x_1x_2 \cdot (a_1,a_1+1)(a_2,a_2+1) \cdot y_1y_2,
\end{eqnarray*}
chosen such that $a_1,a_2,a_2+1 \notin \mu(x_1x_2)$ and $a_1,a_1+1,a_2+1 \notin \mu(y_1y_2)$.
But 
\[
(a_1,a_1+1)(a_2,a_2+1)=(a_1,a_2+1,a_2)(a_1,a_2+1,a_1+1).
\]
Hence $x_1x_2(a_1,a_2+1,a_2)$ and $(a_1,a_2+1,a_1+1)y_1y_2$ are elements of order 3, and $w_3(g_1g_2)=2$ as required.

So we have shown that $w_3(g)=2$ for  $g\in A_n$ that is either a single cycle of odd length at least 5, or a pair of even-length cycles. But any $h\in A_n$ can be written as a disjoint product of such elements plus a number of $3$-cycles and so the Lemma follows.
\end{proof}
Let's return to the general case of any odd prime $p$.  The crucial result of Dvir is the following:
\begin{thm} \cite[Thm. 10.2]{Dvir} \label{thm: Dvir Thm}
	Let $C\subset A_{n}$ be a conjugacy class not containing $2^k$-cycles for $k>1$ and such that $r(C)\geq \frac{1}{2}(n-1)$.  Then $C^3=A_{n}$. 
\end{thm}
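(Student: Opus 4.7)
The plan is to argue combinatorially, leveraging the explicit product rules for conjugacy classes of $S_n$ (and hence of $A_n$) that Dvir develops in the preceding sections of \cite{Dvir}. These rules decide, given two classes $C_1, C_2$, which classes occur with nonzero multiplicity in $C_1 \cdot C_2$ purely in terms of cycle-type data, so the whole problem becomes a question about the combinatorics of partitions of $n$.

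First I would reduce $C^3 = A_n$ to the assertion that for every $g \in A_n$ there exists an intermediate class $D \subseteq C \cdot C$ with $g \in C \cdot D$. The hypothesis $r(C) \geq (n-1)/2$ makes $C^2$ very rich: it says each $c \in C$ has $|\mu(c)| - c^*(c) \geq (n-1)/2$, forcing many and/or long nontrivial cycles, and by Dvir's product rule this forces $C^2$ to cover a wide spectrum of cycle types in $A_n$. I would then apply the product rule once more to show that this pool is large enough so that, after further multiplication by $C$, every $g \in A_n$ is reached. The role of the exclusion of $2^k$-cycles for $k>1$ is to rule out the one class of genuine obstructions in Dvir's framework: single cycles whose length is a power of two generate parity/periodicity restrictions on which classes can appear in a class product, and without this hypothesis one can construct $g \in A_n$ falling outside $C^3$.

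The principal obstacle will be the case analysis required to exhibit a suitable $D$ for each target $g$. One must split on the cycle type of $g$ --- for instance whether $g$ has a long cycle, many short cycles, many fixed points, or cycle lengths themselves close to powers of two --- and in each subcase choose $D \subseteq C^2$ that matches the combinatorial conditions of Dvir's product formula. I expect the tightest cases to be the two extremes of $r(g)$ (for example $g$ a single $3$-cycle at one end and $g$ a single long cycle at the other) together with small values of $n$ relative to the cycle lengths present in $C$; an induction on $n$, or on $r(g)$, is likely to be needed to dispatch these uniformly. An alternative, character-theoretic route via the Frobenius formula $C^3 \ni g$ iff $\sum_\chi \chi(c)^3\overline{\chi(g)}/\chi(1)^2 \neq 0$ could be attempted using Murnaghan--Nakayama or Larsen--Shalev bounds, but I would expect the resulting estimates to be too lossy to capture the sharp hypothesis $r(C) \geq (n-1)/2$, so the combinatorial route seems mandatory.
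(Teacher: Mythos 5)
This statement is not proved in the paper at all: it is quoted verbatim from Dvir (\cite[Thm.~10.2]{Dvir}) and used as a black box, so there is no internal argument to compare yours against. Judged on its own terms, your submission is a research plan rather than a proof. Every load-bearing step is deferred: you never state Dvir's product rule for conjugacy classes, never identify which classes $D$ actually occur in $C^2$ under the hypothesis $r(C)\geq\frac{1}{2}(n-1)$, and never verify that each target class of $A_n$ lies in $C\cdot D$ for a suitable such $D$. The phrases ``this forces $C^2$ to cover a wide spectrum of cycle types'' and ``this pool is large enough'' are exactly the assertions that need proof, and the case analysis you flag as the ``principal obstacle'' is left entirely undone. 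In particular, the quantitative content of the hypothesis --- why the threshold is $\frac{1}{2}(n-1)$ and not some other function of $n$ --- plays no role anywhere in your outline, which is a sign that the argument has not actually engaged with the theorem.

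Your gloss on the role of the $2^k$-cycle exclusion is also too vague to be checkable: ``parity/periodicity restrictions'' does not identify the obstruction, and no example of a $g\in A_n$ outside $C^3$ is produced to justify the claim that the hypothesis is needed. The standard route (which is essentially Dvir's) is quite different in structure from what you sketch: one first shows that the hypothesis on $r(C)$ forces $C^2$ to contain a class of elements with very few, very long cycles (ideally an $n$- or $(n-1)$-cycle class), and then invokes a separate covering theorem for products of a class with a long-cycle class. If you want to reconstruct the proof, you should start by locating and proving that intermediate step precisely, rather than attempting a direct case analysis over all cycle types of $g$.
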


\begin{cor} \label{cor:Dvir cor}
	Fix a prime $p\geq 3$ and suppose that $n\geq kp$ for some $k\in \mathbb{N}$. Let $C \subset A_{n}$ be a conjugacy class of elements of cycle type $(p^k,1^{n-kp})$. If $C^{3}\neq A_{n}$, then $n\geq (k+1)p$.
	
\end{cor}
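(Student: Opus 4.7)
The plan is to apply Theorem \ref{thm: Dvir Thm} directly via its contrapositive, after computing $r(C)$ and verifying that the hypothesis on $2^k$-cycles is vacuous for our class.

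First, I would unpack Dvir's condition for $C$. Since the elements of $C$ have cycle type $(p^k, 1^{n-kp})$ with $p \geq 3$, no nontrivial cycle has length a power of $2$, so the first hypothesis of Theorem \ref{thm: Dvir Thm} holds trivially. Next, by Definition \ref{def:Initial definitions of An}, the support satisfies $|\mu(C)| = kp$ and the number of nontrivial cycles is $c^*(C) = k$, giving
\[
r(C) = |\mu(C)| - c^*(C) = kp - k = k(p-1).
\]

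The contrapositive of Theorem \ref{thm: Dvir Thm} then tells us that if $C^3 \neq A_n$, we must have $r(C) < (n-1)/2$, i.e.\
\[
k(p-1) < \frac{n-1}{2}, \qquad \text{so} \qquad n > 2k(p-1) + 1.
\]
Since $n$ is an integer, this gives $n \geq 2k(p-1) + 2$.

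Finally I would reduce the conclusion $n \geq (k+1)p$ to checking the inequality $2k(p-1) + 2 \geq (k+1)p$, which rearranges to $k(p-2) \geq p-2$. Since $p \geq 3$ and $k \geq 1$, this holds, and the corollary follows. There is no real obstacle here; the argument is a direct numerical consequence of Theorem \ref{thm: Dvir Thm} once $r(C)$ is computed.
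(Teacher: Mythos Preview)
Your proof is correct and follows essentially the same approach as the paper: compute $r(C)=k(p-1)$, apply the contrapositive of Theorem~\ref{thm: Dvir Thm}, and finish with a numerical check. Your direct verification of $2k(p-1)+2\geq (k+1)p$ is in fact slightly cleaner than the paper's argument, which splits into the cases $k=1$ and $k\geq 2$ and handles the latter by contradiction.
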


\begin{proof}
	Firstly note that $r(C)=k(p-1)$. So if $k=1$ then (by Theorem \ref{thm: Dvir Thm}) the assumption that $C^{3}\neq A_{n}$ implies that $(n-1)/2>p-1$, which holds if and only if $n>2p-1$.
	Now assume that $k \geq 2$ and suppose for a contradiction that $n<(k+1)p$. Well again by Theorem \ref{thm: Dvir Thm}, the assumption that  $C^{3}\neq A_{n}$ implies that $(n-1)/2>k(p-1)$. Hence \[
	\frac{(k+1)p-1}{2}>k(p-1), \]
	and this reduces to $2k-1>p(k-1)$. But as $p \geq 3$, it follows that $p(k-1)\geq 3k-3$  and we have a contradiction.
\end{proof}

\textbf{Proof of Theorem \ref{thm:An} for $p\geq 3$:} This is immediate from Corollary \ref{cor:Dvir cor} as $n$ is finite. \qedsymbol

Now that we have bounded the $p$-width of $A_n$, it is interesting to ask whether this bound is sharp asymptotically. Similar asymptotic problems in symmetric groups have been considered by Larsen and Shalev \cite{LaSh}. In particular, the following result will be useful

\begin{thm}\label{thm:larsen}\cite[Thm. 1.10]{LaSh}
For all $\epsilon > 0$ there exists $N_0$ such that for all $n\geq N_0$, if $g \in S_n$ satisfies both
\begin{center} \begin{enumerate}
	\item $c^{*}(g)+\text{fix}(g)< (\frac{1}{4}-\epsilon) n$
	\item $\text{fix}(g^2)<n^{1/4-\epsilon}$
\end{enumerate} \end{center}
%\begin{equation}
%c^{*}(g)+\text{fix}(g)< (\frac{1}{4}-\epsilon) n 
% \\
%\text{ and } \text{fix}(g^2)<n^{1/4-\epsilon}, \label{eq:conditions2}
%\end{equation}
then $(g^{S_n})^2=A_n$.	\end{thm}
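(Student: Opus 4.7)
My plan is to translate the assertion into a character-theoretic statement via Frobenius's class-multiplication formula: for each $h \in S_n$,
\[
\#\{(x,y) \in g^{S_n} \times g^{S_n} : xy = h\} \;=\; \frac{|g^{S_n}|^2}{n!} \sum_{\chi \in \mathrm{Irr}(S_n)} \frac{\chi(g)^2 \,\overline{\chi(h)}}{\chi(1)}.
\]
Since $(g^{S_n})^2 \subseteq A_n$ automatically (a product of two same-parity permutations is even), the conclusion $(g^{S_n})^2 = A_n$ is equivalent to the positivity of the above sum for every $h \in A_n$. The trivial and sign characters together contribute $1 + \mathrm{sgn}(g)^2\mathrm{sgn}(h) = 2$ for such $h$, so it suffices to prove that
\[
\Bigl| \sum_{\chi \neq 1, \mathrm{sgn}} \frac{\chi(g)^2 \,\overline{\chi(h)}}{\chi(1)} \Bigr| < 2
\]
uniformly in $h \in A_n$, provided $n \geq N_0(\epsilon)$.

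The crux is a sharp uniform character bound of the form $|\chi(g)| \leq \chi(1)^{1/2 - \delta}$ for every nontrivial $\chi \in \mathrm{Irr}(S_n)$, with $\delta = \delta(\epsilon) > 0$. Hypothesis (1), that the total number of $g$-orbits $c^*(g) + \mathrm{fix}(g)$ is less than $(\tfrac14 - \epsilon)n$, forces $g$ to have large support concentrated in cycles of length at least $3$; hypothesis (2), bounding $\mathrm{fix}(g^2)$ by $n^{1/4 - \epsilon}$, prevents a pathological accumulation of $1$- and $2$-cycles. To extract the exponent $\tfrac12 - \delta$, I would apply the asymptotic character estimates proved via the Murnaghan--Nakayama rule together with hook-length bounds, expressing $|\chi^\lambda(g)|$ in terms of a ``virtual degree'' governed by the cycle type of $g$; this is the core Larsen--Shalev machinery for asymptotic character theory of $S_n$.

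Given such a bound, I would control the remaining sum in two regimes. For $h$ with small $S_n$-centralizer, Cauchy--Schwarz combined with orthogonality gives
\[
\sum_{\chi \neq 1, \mathrm{sgn}} \frac{|\chi(g)|^2\,|\chi(h)|}{\chi(1)} \;\leq\; \Bigl(\sum_{\chi} \chi(1)^{-4\delta}\Bigr)^{1/2} \cdot |C_{S_n}(h)|^{1/2},
\]
and the Witten-type estimate $\sum_{\chi \in \mathrm{Irr}(S_n)} \chi(1)^{-s} \to 1$ as $n \to \infty$ for any fixed $s > 0$ makes the first factor negligible. For $h$ with large centralizer (equivalently, many short cycles), the trivial bound $|\chi(h)| \leq \chi(1)$ is too wasteful, and one must instead apply a version of the same character bound to $h$, exploiting that an $h$ with few cycles of length $\geq 3$ is itself constrained. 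The main obstacle is obtaining the sharp exponent $\tfrac12 - \delta$ in the character bound for $g$ from the constant $\tfrac14$ in hypothesis (1); cruder estimates (e.g.\ of Fomin--Lulov or Roichman type) fall short of the threshold, so the full strength of the Larsen--Shalev asymptotic analysis is needed.
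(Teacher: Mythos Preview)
The paper does not prove this statement at all: Theorem~\ref{thm:larsen} is quoted verbatim from Larsen and Shalev \cite[Thm.~1.10]{LaSh} and used as a black box in the proof of Theorem~\ref{thm:alternating-asymptotic}. There is therefore no ``paper's own proof'' to compare against; the author simply invokes the result.

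Your sketch is broadly faithful to how Larsen and Shalev actually establish the theorem in \cite{LaSh}: one reduces to showing positivity of the Frobenius structure constant, isolates the contribution of the linear characters, and dominates the remainder using the sharp character bounds $|\chi(g)|\le \chi(1)^{\alpha}$ that are the main technical achievement of that paper. Two caveats are worth flagging. First, your Cauchy--Schwarz estimate gives a bound of order $|C_{S_n}(h)|^{1/2}$, which is useless for $h$ with large centralizer (e.g.\ $h=1$); you acknowledge this but the actual argument in \cite{LaSh} does not split into two regimes for $h$ in the way you suggest---rather, it uses the character bound for $g$ alone together with the Witten-type estimate $\sum_{\chi\neq 1,\mathrm{sgn}}\chi(1)^{-s}\to 0$ (valid for $S_n$ for any fixed $s>0$) and the trivial bound $|\chi(h)|\le \chi(1)$, which already suffices once the exponent on $\chi(g)$ is below $1/2$. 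Second, the precise link between the hypothesis $c^*(g)+\mathrm{fix}(g)<(\tfrac14-\epsilon)n$ and the exponent $\tfrac12-\delta$ is the delicate part, requiring the virtual-degree machinery of \cite{LaSh}; your sketch correctly identifies this as the crux but does not supply it, so as written it is an outline rather than a proof.
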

\begin{thm} \label{thm:alternating-asymptotic}	Fix $p\geq 3$. There exists $N$ such that for all $n> N$, $w_p(A_n)=2$.
\end{thm}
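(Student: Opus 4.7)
The plan is to invoke Theorem \ref{thm:larsen} on a suitable class of $p$-elements of $A_n$. Since Lemma \ref{lem:An p=3} already shows $w_3(A_n)=2$ for all $n\geq 5$, I may assume $p\geq 5$; for such $p$ one has $(p-1)/p \geq 4/5 > 3/4$, and this strict inequality will drive the whole argument.

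The candidate class is that of elements $g \in S_n$ of cycle type $(p^k, 1^{n-kp})$ with $k = \lfloor n/p \rfloor$. Because $p$ is odd, each such $g$ is an even permutation of order $p$, so $g^{S_n} \subseteq I_p(A_n)$. Condition (2) of Theorem \ref{thm:larsen} is essentially automatic: since $\gcd(2,p)=1$, squaring preserves the cycle type of $g$, so $\text{fix}(g^2) = \text{fix}(g) = n - kp \leq p - 1$, which is eventually dominated by $n^{1/4 - \epsilon}$ for any fixed $\epsilon$.

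Condition (1) is where all the real work lies, though it amounts to a short estimate: one computes $c^*(g) + \text{fix}(g) = k + (n-kp) = n - k(p-1)$, and by the choice of $k$ this is at most $n/p + (p-1)$. Demanding $n/p + (p-1) < (\tfrac14 - \epsilon)n$ rearranges to $(\tfrac14 - \epsilon - \tfrac{1}{p})n > p-1$, which is solvable in $n$ precisely when $\tfrac{1}{p} < \tfrac14 - \epsilon$, and one can choose such an $\epsilon > 0$ exactly because $p \geq 5$. Fixing such an $\epsilon$, taking $N$ larger than the Larsen--Shalev threshold $N_0(\epsilon)$ and also large enough to clear the elementary inequality above, Theorem \ref{thm:larsen} then yields $(g^{S_n})^2 = A_n$ and hence $w_p(A_n) \leq 2$. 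The matching lower bound $w_p(A_n) \geq 2$ is immediate since the identity is not of order $p$.

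The main obstacle is really the one hypothesis $p \geq 5$: the Larsen--Shalev constraint requires the witnessing element to move a proportion $> 3/4$ of $\{1,\dots,n\}$ in non-trivial cycles, and for $p = 3$ a single element of order $p$ can move at most a $2/3$-proportion of the ground set, so this line of attack breaks down entirely. That is precisely why Lemma \ref{lem:An p=3} had to be established separately by an explicit cycle-surgery argument. Once $p \geq 5$, by contrast, the asymptotic statement reduces to a one-line application of Theorem \ref{thm:larsen} to a canonical family of classes.
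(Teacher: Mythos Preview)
Your proposal is correct and follows essentially the same approach as the paper: handle $p=3$ via Lemma~\ref{lem:An p=3}, and for $p\geq 5$ apply Theorem~\ref{thm:larsen} to the class of cycle type $(p^{\lfloor n/p\rfloor},1^{n-p\lfloor n/p\rfloor})$, checking both hypotheses by the same elementary estimates. The only cosmetic difference is that the paper fixes $\epsilon=1/p^2$ explicitly and writes down the threshold $N_1=(4p^3-4p^2)/(p^2-4p-4)$, whereas you leave $\epsilon$ unspecified subject to $1/p<1/4-\epsilon$; your version is arguably cleaner.
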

\begin{proof}
	As the case where $p=3$ is immediate from Lemma \ref{lem:An p=3}, we restrict our attention to $p\geq 5.$
	Fix $\epsilon = 1/p^2$ and let $N_0$ be the constant given by Theorem \ref{thm:larsen}. It suffices to show that for sufficiently large $n\geq N_0$, there exists an element $g \in I_p(A_n)$ satisfying conditions (1) and (2) of Theorem \ref{thm:larsen}.
	
	Well clearly if $n\geq p$ then $I_p(A_n)\neq \emptyset$, so assuming this, let $g \in I_p(A_n)$ be any element consisting of $\lfloor n/p \rfloor$ disjoint $p$-cycles.  Evidently \[c^{*}(g)+\text{fix}(g) \leq n/p+(p-1).\] 
	It then follows that if \[n>N_1:=\frac{4p^3-4p^2}{p^2-4p-4},\] then 
	\[
	c^{*}(g)+\text{fix}(g)< (\frac{1}{4}-\frac{1}{p^2}) n.\]
	Next we note that $\text{fix}(g^2)=\text{fix}(g)\leq p-1$ and that $n^{1/4-1/p^2}$ is strictly increasing in $n$. Hence there exists $N_2$ such that $\text{fix}(g^2)<n^{1/4-1/p^2}$ for all $n>N_2$.
 So letting \[
	N:=\max\{ N_0,N_1,N_2\}
	\]
it is clear that for all $n>N$, our chosen element $g$ satisfies conditions (1) and (2) and we are done by Theorem \ref{thm:larsen}.
\end{proof}
%\begin{defn}
%	For $g \in S_n$, let $f_k(g)$ denote the number of cycles of length $k$ in the decomposition of $g$. Furthermore let $\Sigma_k(g)$ denote the union of all $g$-orbits of length at most $k$ so that 
%	\[
%	\vert \Sigma_k(g)\vert=\sum_{i=1}^k if_i(g).	\]
%	Lastly, we define 
%	\[
%	E(g):=\frac{1}{\log n}\sum_{k\geq 1} \frac{\log^{+}\vert \Sigma_k \vert-\log^{+}\vert \Sigma_{k-1} \vert}{k},
%	\]
%	where we write $\log^+ x$ for $\max (\log x, 0)$.
%\end{defn}
%
%\begin{proof}
%	Let's assume that $n=ap+b$ where $0 \leq b <p$, and so there exists a class $C\subset I_p(A_n)$ with representative $g$ consisting of $a$ disjoint $p$-cycles. Note  that 
%	 \begin{equation}
%	\Sigma_k(g)=
%	\begin{cases}
%	b & \text{if } k<p,\\
%	n & \text{if } k\geq p. 
%	\end{cases}
%	\end{equation}
%	Furthermore,
%	\[
%	E(g)=\frac{\log n +b(p-1)}{p}.
%	\]
%	 \begin{equation}
%	E(g)=
%	\begin{cases}
%	b & \text{if } k<p,\\
%	n & \text{if } k\geq p. 
%	\end{cases}
%	\end{equation}
%\end{proof}
\section{Low rank groups of Lie type}\label{sec:LieType}
In this section we consider the $p$-width of low-rank groups of Lie type. This includes an examination of all groups of (twisted) rank 1. We recall that the case where $p=2$ is investigated in \cite{AJM18} and so we restrict our attention to odd primes.

Let 
\[
\mathcal{A}:= \{PSL_2(q), PSL_3^{\epsilon}(q), \,{}^{2}B_{2}(2^{2n+1}), {}^{2}G_{2}(3^{2n+1}), n\geq 1\}.
\]

\begin{thm}\label{thm:MainThmLieType}
	Let $G \in \mathcal{A}$ and let $p$ be an odd prime dividing $\vert G \vert$. Then $w_p(G)\leq 3$.
\end{thm}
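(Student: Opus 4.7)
The plan is to prove Theorem \ref{thm:MainThmLieType} by treating each of the four infinite families in $\mathcal{A}$ separately, establishing in each case the more precise results previewed in the introduction (Propositions \ref{prop:PSL2 p width}--\ref{prop:ree}). As noted there, the stronger bound $w_p(G)=2$ in fact holds for almost all pairs $(G,p)$ under consideration, with $w_p(G)=3$ needed only in exceptional situations such as $(PSU_3(q),3)$ with $q\equiv 2\pmod{3}$.

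The central tool will be the classical theorem of Frobenius (Theorem \ref{thm:cmc}): for conjugacy classes $C_1,C_2,C_3$ of $G$ and $g\in G$, the number of triples $(x_1,x_2,x_3)\in C_1\times C_2\times C_3$ with $x_1x_2x_3=g$ equals
\[
\frac{|C_1|\,|C_2|\,|C_3|}{|G|}\sum_{\chi\in\text{Irr}(G)}\frac{\chi(x_1)\chi(x_2)\chi(x_3)\overline{\chi(g)}}{\chi(1)^{2}}.
\]
Hence to show $g\in I_p(G)^3$ it suffices to find three $p$-element classes $C_1,C_2,C_3$ making this sum strictly positive; the trivial character contributes $+1$, and the task reduces to bounding the remaining terms. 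In cases where $w_p(G)=2$ one applies the analogous two-class formula.

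For each family I would proceed as follows. First, enumerate the conjugacy classes of elements of order $p$ according to how $p$ relates to $q$: the defining characteristic (giving unipotent classes), primes dividing $q\pm 1$ (giving split/non-split toral semisimple classes), and the further cyclotomic divisors arising in $PSL_3^{\epsilon}(q)$, ${}^{2}B_{2}(q)$ and ${}^{2}G_{2}(q)$ (namely $q^2+q+1$, $q^2\pm\sqrt{2q}+1$, and $q^2\pm\sqrt{3q}+1$, together with the prime $3$ in the Ree case). Second, for each such $p$, single out a class $C$ whose normalised character values $|\chi(C)|/\chi(1)$ are uniformly small across non-trivial $\chi\in\text{Irr}(G)$, and use this to force positivity of the Frobenius sum with $C_1=C_2=C$, or with $C_1=C_2=C_3=C$, for every target $g\in G$. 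Since the character tables of $PSL_2(q)$, $PSL_3^{\epsilon}(q)$, ${}^{2}B_{2}(q)$ and ${}^{2}G_{2}(q)$ are explicitly known, these estimates can be made effective rather than merely asymptotic.

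The main obstacle will be the cases in which two classes genuinely do not suffice, forcing the use of three and making the bound in the theorem sharp; the archetype is $PSU_3(q)$ at $p=3$ with $q\equiv 2\pmod{3}$, where there is a single class $C$ of order-$3$ elements but $C\cdot C$ misses part of $G$, so one must verify positivity of the three-class sum on every conjugacy class of $G$. A secondary obstacle is that the character-ratio estimates degrade for small $q$, so finitely many small groups in each family must be handled directly from the character table (with computer algebra where convenient). Once the positivity of the Frobenius sum has been verified case by case, Theorem \ref{thm:MainThmLieType} follows by combining the bounds $w_p(G)\leq 2$ or $\leq 3$ obtained for each family.
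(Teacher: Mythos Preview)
Your proposal is correct and follows essentially the same architecture as the paper: a family-by-family case analysis, with the Frobenius structure-constant formula (Theorem~\ref{thm:cmc}) as the engine and small-$q$ cases handled separately.

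The main difference is tactical rather than conceptual. You plan to bound the character ratios $|\chi(C)|/\chi(1)$ directly from the explicit character tables of each family, for every type of prime divisor. The paper instead outsources most of this work to the literature: Theorem~\ref{thm:GM} (Guralnick--Malle) disposes of ${}^2B_2$ entirely and reduces $PSL_2(q)$ and ${}^2G_2(q)$ to the defining-characteristic prime; the results of Arad--Herzog \cite{AH} on class products in $PSL_2(q)$ are quoted rather than rederived; and for $PSL_3^{\epsilon}(q)$ the paper relies on Orevkov's exhaustive tabulation \cite{Orevkov} of which triples of classes multiply to cover $G$, so that the argument becomes a matter of reading off the right entries rather than estimating sums. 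Only for ${}^2G_2(q)$ at $p=3$ does the paper actually carry out the kind of hands-on character-ratio estimate you describe. Your route would work, but would reproduce a good deal of what these references already contain; the paper's route is shorter precisely because these rank-one and rank-two tables have been mined before. One small point: besides $(PSU_3(q),3)$ with $q\equiv 2\pmod 3$, the pair $(PSL_2(2^{2n+1}),3)$ is also a genuine width-three case, so your list of exceptions should include it.
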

Width questions are often reduced to studying the product of a given collection of conjugacy classes and that is our general strategy here: given $g \in G$, we show that $g\in C_1\cdots C_k$ for some collection of classes $C_1,\dots,C_k\subseteq I_p(G)$. From this it is then immediate that $w_p(g)\leq k$.
The character theory of $G$ plays an important role via the following well-known result.

\begin{thm}\label{thm:cmc}\cite[$\mathsection1$, Thm 10.1]{AH} Let $G$ be a finite group, $C_1,\dots,C_k$ be conjugacy classes of $G$ with representatives $g_1,\dots,g_k$, and let $g\in G$. Then $	g\in C_1\cdots C_k$ if and only if
	\begin{equation}\label{eq:structure-constant}
	\sum_{\chi \in \text{Irr}(G)}\frac{\chi(g_{1})\dots\chi(g_{k})\chi(g^{-1})}{\chi(1)^{k-1}}\neq 0.
	\end{equation}
%\sum_{\chi \in \text{Irr}(G)}\frac{\chi(g_{1})\dots\chi(g_{k})\chi(g^{-1})}{\chi(1)^{k-1}}\neq 0.
%	\] 
\end{thm}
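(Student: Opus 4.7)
The plan is to prove this classical result of Frobenius by working in the group algebra $\mathbb{C}[G]$ and using the action of the centre on irreducible representations. For each conjugacy class $C$ of $G$, form the class sum $\hat{C}:=\sum_{x\in C}x\in \mathbb{C}[G]$; these elements form a $\mathbb{C}$-basis of the centre $Z(\mathbb{C}[G])$. The key observation is that
\[
N(g;C_1,\dots,C_k):=|\{(x_1,\dots,x_k)\in C_1\times\cdots\times C_k : x_1\cdots x_k=g\}|
\]
is precisely the coefficient of $g$ in the product $\hat{C}_1\cdots\hat{C}_k$, and by definition $g\in C_1\cdots C_k$ if and only if this nonnegative integer is positive. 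So the task reduces to extracting that coefficient in terms of irreducible characters.

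For this I would recall that each $\chi\in\mathrm{Irr}(G)$ affords a central character $\omega_\chi:Z(\mathbb{C}[G])\to\mathbb{C}$ which, by Schur's lemma applied to an irreducible representation affording $\chi$, satisfies $\omega_\chi(\hat{C})=|C|\chi(g_C)/\chi(1)$ for any $g_C\in C$. Multiplicativity then gives
\[
\omega_\chi(\hat{C}_1\cdots\hat{C}_k)=\frac{|C_1|\cdots|C_k|}{\chi(1)^k}\,\chi(g_1)\cdots\chi(g_k).
\]
To convert these eigenvalues back into a coefficient, I would use the primitive central idempotents $e_\chi=\frac{\chi(1)}{|G|}\sum_{x\in G}\chi(x^{-1})x$, which satisfy $z\cdot e_\chi=\omega_\chi(z)\,e_\chi$ for every $z\in Z(\mathbb{C}[G])$, together with $1=\sum_{\chi}e_\chi$.

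Multiplying the latter identity by $\hat{C}_1\cdots\hat{C}_k$, computing the coefficient of $g$ on both sides, and using $\overline{\chi(g)}=\chi(g^{-1})$, one obtains
\[
N(g;C_1,\dots,C_k)=\frac{|C_1|\cdots|C_k|}{|G|}\sum_{\chi\in\mathrm{Irr}(G)}\frac{\chi(g_1)\cdots\chi(g_k)\chi(g^{-1})}{\chi(1)^{k-1}}.
\]
Since the prefactor $|C_1|\cdots|C_k|/|G|$ is strictly positive, $N(g;C_1,\dots,C_k)\neq 0$ if and only if the displayed character sum is nonzero, which in turn is equivalent to $g\in C_1\cdots C_k$ as $N$ is a nonnegative integer.

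The only real obstacle is the careful bookkeeping: verifying via Schur's lemma that $\omega_\chi$ takes the stated value on each $\hat{C}_i$, and then extracting a single coefficient from a central element through the idempotent decomposition (equivalently, through column orthogonality of the character table). Both steps are routine but require paying attention to the $\chi(1)^{k-1}$ normalisation and to the switch from $\overline{\chi(g)}$ to $\chi(g^{-1})$ in the final formula.
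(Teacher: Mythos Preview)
Your argument is correct and is the standard Frobenius proof via class sums, central characters and the idempotent decomposition of $Z(\mathbb{C}[G])$. Note that the paper does not actually prove this theorem: it is quoted without proof as a classical result from \cite{AH}, so there is nothing to compare against beyond observing that your derivation is exactly the textbook one underlying that reference.
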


The sum in (\ref{eq:structure-constant}) is known as the \emph{normalised structure constant} and we denote it by $ \kappa(C_1,\dots,C_k,g^{G})$. Often the summand for the trivial character is dominant, and to show that $\kappa \neq 0$ it suffices to accurately bound character ratios for $\chi \in \irr \backslash \{1_G\}$ (for example, see Proposition \ref{prop:ree} below).

%. To this end, we use the character theory of the finite simple groups to compute 
%
%\begin{thm}\label{thm:cmc}\cite[$\mathsection1$, Thm 10.1]{AH}
%	\[
%	\kappa(g_{1}^{G},\dots,g_{k}^{G},g^{G}):=\sum_{\chi \in \text{Irr}(G)}\frac{\chi(g_{1})\dots\chi(g_{k})\chi(g^{-1})}{\chi(1)^{k-1}}.
%	\] 
%\end{thm}
%Here 	$\kappa(g_{1}^{G},\dots,g_{k}^{G},g^{G})$ is known as the normalised structure constant, and by

Existing work of Guralnick and Malle \cite{GM} will also be useful in the proof of Theorem \ref{thm:MainThmLieType}. In particular, we use some cases covered by the following result.

\begin{thm}\cite[7.1]{GM} \label{thm:GM}
	Let $G$ be a rank 1 finite simple group of Lie type. Let $C$ be the conjugacy class of an element $g$ of order $o(g)>2$. Assume that one of the following holds
	\begin{enumerate}
		\item $G=PSL_2(q)$ with $q$ odd and $g$ not unipotent;
		\item $G=PSL_2(q)$ with $q$ even and $o(g)$ not divisible by $q+1$;
		\item $G={}^{2}G_{2}(3^{2n+1}), n\geq 1$ and $o(g)$ not divisible by $3$;
		\item $G={}^{2}B_{2}(2^{2n+1}), n\geq 1$; or
		\item $G=PSU_3(q)$ and $g$ is regular of order $(q^2-q+1)/d$ or $(q^2-1)/d$ with $d=\text{gcd}(3,q+1)$.
	\end{enumerate}
Then $G \backslash\{1\} \subseteq C^2.$
\end{thm}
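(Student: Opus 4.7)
I would attack this class-product statement via the Frobenius formula in Theorem \ref{thm:cmc}: for any $h\in G\setminus\{1\}$, having $h\in C^2$ is equivalent to the normalised structure constant
\[
\kappa(C,C,h^{G}) \;=\; 1 \;+\; \sum_{\chi\in \irr\setminus\{1_G\}} \frac{\chi(g)^{2}\,\chi(h^{-1})}{\chi(1)}
\]
being nonzero. The strategy is therefore to show, for each of the five families in the statement, the uniform bound
\[
\sum_{\chi\neq 1_G} \frac{|\chi(g)|^{2}\,|\chi(h)|}{\chi(1)} \;<\; 1,
\]
so that $\kappa>0$ and the desired class product covers $G\setminus\{1\}$. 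The rank 1 hypothesis is crucial here since the complete ordinary character tables are known explicitly in every case: Jordan--Schur for $PSL_2(q)$, Simpson--Frame/Ennola for $PSU_3(q)$, Suzuki for ${}^{2}B_2(2^{2n+1})$, and Ward for ${}^{2}G_2(3^{2n+1})$. So this is ultimately an explicit estimation problem rather than a structural one.

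Concretely I would proceed case by case. In case (1), with $g\in PSL_2(q)$ semisimple of order $>2$, the centraliser $C_G(g)$ is a cyclic torus of order $(q\pm 1)/d$, and from the Jordan--Schur table every nontrivial $\chi(g)$ is a sum of at most two roots of unity (for principal/discrete series characters) or equals $\pm 1$ (Steinberg). Thus $|\chi(g)|\le 2$ uniformly in $\chi$, while $\chi(1)$ grows with $q$; combined with the column orthogonality bound $\sum_{\chi}|\chi(h)|^2=|C_G(h)|$ one obtains the required estimate via Cauchy--Schwarz. Cases (2) and (4) work the same way after replacing the character table with Suzuki's (for ${}^{2}B_2$) or the natural-characteristic submatrix for $PSL_2(q)$, $q$ even; the order-divisibility hypotheses in (2) and (3) are exactly what is needed to force $|\chi(g)|$ to remain uniformly bounded, avoiding the problematic characters where $\chi(g)$ could be as large as $\chi(1)^{1/2}$. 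Cases (3) and (5) are handled analogously from Ward's and Ennola's tables, with the specified orders in (5) being precisely those classes whose centralisers are the maximal tori $T_{q^2-q+1}/d$ or $T_{q^2-1}/d$, yielding tight control on character values.

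The one technical issue is that in (5) we must also verify the image $h=1$ is \emph{not} in $C^2$ only if it were; in fact $h=1$ is excluded from the statement, so one only needs $h\ne 1$, and the bound above is strictly sharper for $h$ far from the identity. I would handle the boundary cases (small $q$) computationally using the explicit character tables, and use the uniform bounds above for $q$ beyond an absolute constant. The main obstacle is \emph{case (5)} for $PSU_3(q)$: its character table is substantially more intricate than those of the other rank 1 groups, with several families of unipotent and semisimple characters, and the two permitted centraliser types yield different bounds on $|\chi(g)|$ that must be juggled separately. Here I would exploit the block structure of the table and the fact that $g$ is regular semisimple to restrict attention to Deligne--Lusztig characters indexed by the corresponding tori, for which $|\chi(g)|\le |W(T)|$ by standard character-theoretic estimates.
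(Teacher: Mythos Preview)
The paper does not prove this statement: Theorem~\ref{thm:GM} is quoted verbatim from Guralnick--Malle \cite[7.1]{GM} and used as a black box in the proofs of Propositions~\ref{prop:PSL2 p width}--\ref{prop:ree}. So there is no ``paper's own proof'' to compare against.

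That said, your proposed attack is essentially the method by which such results are established in the literature, including in \cite{GM} itself: compute or bound the normalised structure constant via the Frobenius formula and the known generic character tables of the rank~1 groups. The specific tables you cite (Jordan--Schur, Suzuki, Ward, Simpson--Frame/Ennola) are the correct ones, and your reading of the hypotheses in (1)--(5) as precisely those needed to keep $|\chi(g)|$ uniformly small is accurate. The Cauchy--Schwarz manoeuvre combining $|\chi(g)|\le c$ with column orthogonality $\sum_\chi |\chi(h)|^2 = |C_G(h)|$ is standard and would work once the small-$q$ cases are checked directly.

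One caution on case~(5): your closing remark that for regular semisimple $g$ one has $|\chi(g)|\le |W(T)|$ for Deligne--Lusztig characters is not quite enough on its own, since the unipotent characters of $PSU_3(q)$ that are not of principal-series type (e.g.\ the cuspidal unipotent character of degree $q(q-1)$) can take values on regular semisimple classes that require separate treatment, and the degree $q(q-1)$ is comparatively small. In practice one handles these few characters individually from the Simpson--Frame table rather than via a uniform Deligne--Lusztig bound; this is a bookkeeping issue rather than a conceptual gap, but it is where the actual work in case~(5) lies.
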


%So this leaves us with tackling the following
%	\begin{enumerate}
%	\item $G=PSL_2(q)$ with $q$ odd and $g$ unipotent i.e. of the characteristic prime $p$; and even characteristic cases.
%	\item $G=PSL_2(q)$ with $q$ even and $o(g)$  divisible by $q+1$ if this is prime;
%	\item $G={}^{2}G_{2}(3^{2n+1}), n\geq 1$ and $o(g)=3$;
%	\item $G=PSU_3(q)$ and $g$ not regular of order $(q^2-q+1)/d$ or $(q^2-1)/d$ with $d=\text{gcd}(3,q+1)$. <-- guess this is where most of the action is.
%\end{enumerate}
%
%For the long term project of finding the $p$-width of all groups of Lie type, the small rank cases will be essential for use in inductive arguments
%We will cover the Lie type sumple groups of (twisted) rank 1. These families are
%\[
%PSL_2(q), PSU_3(q), \,{}^{2}B_{2}(2^{2n+1}), {}^{2}G_{2}(3^{2n+1}), n\geq 1;
%\]
We consider each of the families listed in Theorem \ref{thm:MainThmLieType} individually. For ease of reference we adopt the notation of \cite{AH,Orevkov, J94} for the groups $PSL_2(q), PSL^{\epsilon}(q)$ and ${}^{2}G_{2}(q^2)$, respectively.
%\subsection{$PSL_2(q)$}
\begin{prop}
	\label{prop:PSL2 p width}Let $G=PSL_{2}(q)$, $q\geq4$ and let $p$
	be an odd prime dividing $|G|$. If $q=2^{2n+1}\geq8$ and $p=3$ then $w_p(G)=3$. Otherwise $w_p(G)=2$.
	
%	\begin{enumerate}
%		\item If $q=2^{2n+1}\geq8$ then $w_p(G)=3$ if $p=3$ and $w_p(G)=2$ otherwise.
%		%\item If $q\equiv3\mod4$ then $w_p(G)=3$ if $p=2$ and $w_p(G)=2$ otherwise.
%		%\item If $q\neq2^{2n+1}$ and $q\not\equiv3\mod4$ then $w_p(G)=2$.
%	\end{enumerate}
\end{prop}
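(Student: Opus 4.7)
The plan is to split into cases according to the parity of $q$ and whether $p$ is the defining characteristic, invoking Theorem \ref{thm:GM} where applicable and Theorem \ref{thm:cmc} otherwise. Whenever $G\setminus\{1\}\subseteq C^2$ is established for some class $C\subseteq I_p(G)$, the identity is also in $I_p(G)^2$ because $g^{-1}\in I_p(G)$ when $g\in I_p(G)$ and $p$ is odd, so $w_p(G)\le 2$ follows. The easy cases can then be disposed of quickly: if $q$ is odd and $p\ne\mathrm{char}$, order-$p$ elements are semisimple and Theorem \ref{thm:GM}(1) gives $w_p(G)=2$; if $q$ is even and $p\mid q-1$, order-$p$ elements lie in the split torus with order coprime to $q+1$, and Theorem \ref{thm:GM}(2) again gives $w_p(G)=2$. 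This leaves two residual cases: (i) $q$ odd with $p=\mathrm{char}$, where order-$p$ unipotents split into two $G$-classes; and (ii) $q$ even with $p\mid q+1$, where order-$p$ elements form a single class $C$ in the non-split torus, the sub-case $(q,p)=(2^{2n+1},3)$, $q\ge 8$ of which has claimed $p$-width $3$.

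For cases (i) and (ii) outside the exception, I would use the explicit character table of $PSL_2(q)$ (as in \cite{AH}) together with Theorem \ref{thm:cmc} to compute the normalised structure constant $\kappa(C_1,C_2,g^G)$ for every non-identity class $g^G$. In each case only a few irreducibles contribute non-trivially --- the trivial character, the Steinberg character, and a single series family (principal or discrete, depending on which torus hosts $C$) --- and the bounds $|\chi(x)|\le 2$ on order-$p$ elements ensure the contribution of $1$ from the trivial character dominates all the rest once divided by the degree. A direct estimate then forces $\kappa\ne 0$, whence $w_p(G)=2$.

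The exceptional case $(q,p)=(2^{2n+1},3)$, $q\ge 8$ requires both directions. For $w_3(G)\le 3$ the same technique applied to the triple structure constant $\kappa(C,C,C,g^G)$ suffices, since the extra factor of a character value on $C$ only helps the trivial term to dominate. For the lower bound $w_3(G)\ge 3$ I would exhibit a class $g^G$ with $\kappa(C,C,g^G)=0$: since $p=3$ forces the discrete series contributions from $C$ to reduce to a very short sum of cube roots of unity, a careful choice of $g$ makes the Steinberg and discrete contributions exactly cancel the trivial one. The main obstacle will be identifying precisely the class producing this vanishing and verifying the cancellation rigorously --- the arithmetic coincidence that $3\mid q+1$ rather than $q-1$ when $q=2^{2n+1}$ is what isolates this one family of $PSL_2(q)$, so the relevant divisibility must be tracked carefully throughout.
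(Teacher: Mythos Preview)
Your reduction via Theorem~\ref{thm:GM} in the semisimple cases is fine and matches the paper. The genuine gap is in your case~(ii) for $p\ge 5$. Your claim that order-$p$ elements form a \emph{single} class in the non-split torus is false: for $q$ even and $p\mid q+1$ the elements of order $p$ fall into $(p-1)/2$ distinct $G$-classes $R_{(q+1)/p},R_{2(q+1)/p},\ldots$, and there is a unique class only when $p=3$. More seriously, your proposed estimate on $\kappa(C,C,g^G)$ cannot succeed. A direct computation with the character table of $SL_2(q)$ (the discrete series $\theta_k$ have $\theta_k(b^j)=-(\epsilon^{jk}+\epsilon^{-jk})$ and $\theta_k(u)=-1$, while the Steinberg and principal series vanish on one side or the other) gives
\[
\kappa(R_j,R_j,u^G)=1-\frac{1}{q-1}\sum_{k=1}^{q/2}\bigl(\epsilon^{2jk}+2+\epsilon^{-2jk}\bigr)=1-\frac{q-1}{q-1}=0
\]
for \emph{every} $j$, so $u\notin R_j^2$ regardless of $p$. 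No bound of the form $|\chi(x)|\le 2$ will rescue this; the cancellation is exact. This is precisely why the paper, when $5\le p\mid q+1$, uses a product of two \emph{distinct} classes $R_{(q+1)/p}\cdot R_{2(q+1)/p}$ (available because $(p-1)/2\ge 2$) and checks that this product does contain the unipotent class. When $p=3$ no second class is available, and that is exactly the obstruction forcing $w_3(G)=3$ for $q=2^{2n+1}$.

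So your lower-bound search in the exceptional case would correctly land on the unipotent class, but you should recognise that the same vanishing occurs for every $R_j$, not only for $p=3$; the distinction between $p=3$ and $p\ge 5$ lies not in the structure constant estimate but in whether a second class of order-$p$ elements exists to repair the failure. Your outline for case~(i) (unipotent $p$ when $q$ is odd) and for the upper bound $w_3(G)\le 3$ is reasonable; the paper handles both by citing the explicit results of \cite{AH} (in particular $C^3=G$ for every nontrivial class) rather than redoing the character sums.
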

\begin{proof}
	Firstly assume that $q=2^{2n+1}\geq8$. Products of conjugacy classes in $PSL_2(q)$ are considered in \cite{AH} and in particular, the proof of \cite[$\mathsection4$, Thm. 4.2]{AH} shows that ${C}^{2}=G$ for all non-trivial conjugacy classes except
	${C}=R_{j}$, $1\leq j\leq q/2$ (see \cite{AH} for notation).
Note here that $R_{j}=(b^{j})^{G}$	for $b\in G$ an element of order $q+1$, and in fact $R_{j}^{2}$ contains all but the unipotent elements of $G$.

Now if $p\,|\,q-1$, it's clear from the conjugacy class descriptions \cite[$\mathsection4$]{AH} that there exists a class $C \subset I_p(G)$ such that $C\neq R_j$. Hence $w_p(G)=2$ by the above. If instead $3\neq p\,|\,q+1$ then $R_{(q+1)/p}\neq R_{2(q+1)/p}$ are distinct classes in $I_p(G)$. Furthermore, calculating structure constants explicitly in CHEVIE \cite{chevie} shows that  $R_{(q+1)/p}\cdot R_{2(q+1)/p}$ contains $C_2$, the unique unipotent class and so again $w_p(G)=2$. 
It  remains to consider the case where $p=3$: well as $q=2^{2n+1}$ it follows that $3\nmid q-1$ and that $R_{(q+1)/3}$ is the unique class of order $3$ elements. As remarked 
 above, $R_{(q+1)/3}^2\neq G$ and so $w_3(G)\geq 3$. But $C^3=G$ for all non-trivial classes \cite[$\mathsection4$, Thm. 4.2]{AH} and so we conclude that $w_3(G)=3$.
For other values of $q$, the proof follows in a very similar manner so we omit the details, but to summarise: Theorem \ref{thm:GM} gives a reduction to the case where $p\mid q$ and then any other necessary structure constants can be checked in CHEVIE \cite{chevie}.
\end{proof}
%\subsection{$PSU_3(q)$ and $PSL_3(q)$}
\begin{prop}\label{prop:PSU3}
Let $G=PSU_{3}(q)$, $q\geq 3$ and let $p$
be an odd prime dividing $|G|$. If $q\equiv2\mod3$ and $p=3$ then
$w_p(G)=3$, otherwise $w_p(G)=2$. 
\end{prop}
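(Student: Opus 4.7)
The plan follows the template of Proposition \ref{prop:PSL2 p width}: use Frobenius' formula (Theorem \ref{thm:cmc}) with Orevkov's character table of $PSU_3(q)$ to compute normalised structure constants, invoking Theorem \ref{thm:GM}(5) only in the rare cases where an order-$p$ element is itself regular of order $(q^2-1)/d$ or $(q^2-q+1)/d$. First I would identify the types of class in $I_p(G)$ according to the relevant torus: unipotent if $p\mid q$, split-torus semisimple if $p\mid q-1$, non-split semisimple if $p \mid (q+1)/d$, and Coxeter if $p\mid(q^2-q+1)/d$, where $d=\gcd(3,q+1)$.

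For the upper bound $w_p(G)\leq 2$, which covers every case except $p=3$ with $q\equiv 2\pmod{3}$, the strategy is to produce classes $C_1, C_2\subseteq I_p(G)$ (usually with $C_2=C_1^{-1}$) such that $\kappa(C_1, C_2, g^G)>0$ for every $g\in G$. Using Orevkov's generic description of the character values one estimates $|\chi(x)|/\chi(1)$ for $x$ of order $p$ uniformly across the principal series, cuspidal, semisimple and unipotent families, showing that the trivial character term $|C_1||C_2|/|G|$ dominates the others. In the defining-characteristic sub-case $p\mid q$ there are only a small number of unipotent classes of order $p$, which have comparatively large size and are easy to handle; the sub-cases $3\mid q$ and $3\mid q-1$ of the $p=3$ split admit similar treatment with semisimple classes.

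The exceptional pair $p=3$, $q\equiv 2\pmod{3}$ requires both inequalities. For $w_3(G)\geq 3$, I would exhibit an element $g\in G$ (a natural candidate being a regular unipotent element, whose order is coprime to $3$) for which $\kappa(C_1, C_2, g^G)=0$ for every pair of order-$3$ classes $C_1, C_2$; under this congruence the order-$3$ elements form a short list of semisimple classes confined to a single torus, and the vanishing can be read off the character table directly. For $w_3(G)\leq 3$, show instead that $\kappa(C_1, C_2, C_3, g^G)>0$ for every $g$ and some choice of three order-$3$ classes $C_i$, via the same dominance-of-the-trivial-term argument as in the generic upper bound, now for a triple product.

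The main obstacle is uniformity in $q$. The generic character table branches on $q\bmod 3$, and for some cuspidal parameters on finer congruences such as $q\bmod 9$; each branch requires its own analysis and sharp bounds on character ratios holding across all $q$ in that branch. The lower bound in the exceptional case is the most delicate, since it demands an exact equality $\kappa=0$ rather than a mere inequality.
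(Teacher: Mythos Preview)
Your approach is correct in spirit and would work, but it takes a more laborious route than the paper. Both arguments rest on the generic character table of $PSU_3(q)$, but the paper does not redo any character-ratio estimates: it simply invokes Orevkov's complete classification of when $1\in C_aC_bC_c$ for conjugacy classes of $SU_3(q)$ and $PSU_3(q)$ \cite[Thm.~1.3, Table~2, Cor.~1.8--1.9]{Orevkov}. With that classification in hand, the proof reduces to identifying, for each prime divisor $p$ of $|G|$, which of Orevkov's eight class-families contain order-$p$ elements, and then reading off from his tables that a suitable union of products $CC'$ covers $G\setminus\{1\}$. Your proposal to bound $|\chi(x)|/\chi(1)$ uniformly and show dominance of the trivial term would essentially reprove Orevkov's results in the cases you need; this is feasible but unnecessary, and your anticipated branching on $q\bmod 9$ is more pessimistic than what actually occurs.

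Two points of detail are worth correcting. First, in the exceptional case $q\equiv 2\pmod 3$, $p=3$, there is exactly \emph{one} conjugacy class of order-$3$ elements in $PSU_3(q)$, namely $\tilde C_6^{(0,(q+1)/3,2(q+1)/3)}$, not merely a short list; this is what makes the lower bound tractable, since you need only show one square $C^2$ misses some element. Second, your proposed witness for $w_3\geq 3$ is off: the class that fails to lie in $C^2$ is the \emph{subregular} unipotent class $\tilde C_2^{(0)}$ (Jordan type $(J_2,J_1)$), not the regular unipotent. This is recorded directly in \cite[Cor.~1.8]{Orevkov}, and the matching upper bound $C^3=G$ is \cite[Cor.~1.9]{Orevkov}; no separate dominance argument for the triple product is needed.
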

\begin{proof}
	The proposition is easily checked for $q\leq7$  using the available character tables in GAP \cite{GAP} and we therefore assume that $q\geq 8$. Now products of conjugacy classes in $G$ (and similarly in $PSL_3(q)$) are examined extensively in \cite{Orevkov}, where structure constants are computed using the generic character tables available in \cite{SF73}. We use this work to pick appropriate classes in $I_p(G)$ and bound the $p$-width.
	
	Firstly assume that $q \not\equiv 2\mod3$: here \cite[Thm 1.3 and Table 2]{Orevkov} detail exactly when $1\not\in C_a C_b C_c$ for any non-central conjugacy classes $C_i \subset SU_3(q)$. Recall that $\vert SU_3(q) \vert =q^3(q^2-q+1)(q+1)^2(q-1)$. The conjugacy classes of $SU_3(q)$ are divided into 8 families with representatives given in \cite[Table 1]{Orevkov}. For example, suppose that $p$ divides $q^2-q+1$. If $p$ also divides $q+1$ then it follows that $p=3$, a contradiction against our earlier assumption. Similarly $p \nmid q-1$ and we see that $C:=C_8^{((q^3+1)/p)}$ is a class of order $p$ elements in the $8^{th}$ family of  \cite[Table 1]{Orevkov}. We then check using \cite[Thm 1.3 and Table 2]{Orevkov} that $C^2$ contains all non-central classes of $SU_3(q)$, and as $SU_3(q)=PSU_3(q)=G$, it follows that $w_p(G)=2$. The result follows similarly when $p$ divides $q$ or $q-1$. In particular we find classes in $I_p(G)$ denoted $C_3^{(q+1)} \text{ or } C_7^{(q+1,(q^2-1)/p)}$ respectively (see \cite{Orevkov} for notation), and the squares of these classes contain all non-central elements.
	
	The final case to consider is when $3 \neq p$ divides $q+1$: here we see from \cite[Table 1]{Orevkov} that there exists a collection of prime order classes (which we denote $\mathscr{C}$) in the $4^{th}$ and $6^{th}$ families. These are
	\[
	\mathscr{C}=\{C_6^{(\alpha,2\alpha,(p-3)\alpha)},C_6^{(\alpha\beta,(p-\beta)\alpha,p\alpha)}\,\mid\,\alpha:=\frac{q+1}{p}, 1\leq \beta \leq \frac{p-1}{2} \} \text{ when }p\geq 7;
	\] 
	\[
	\mathscr{C}=\{C_4^{(\alpha\beta,\alpha\beta,(5-2\beta)\alpha)},C_6^{(\alpha,4\alpha,5\alpha)},C_6^{(2\alpha,3\alpha,5\alpha)}\,\mid\,\alpha:=\frac{q+1}{5}, 1\leq \beta \leq 4 \} \text{ when }p=5.
	\]
	Now in general $G\backslash\{1\} \nsubseteq C^2$ for a single $C\in \mathscr{C}$. However clearly $\vert \mathscr{C} \vert \geq 3$ and checking the technical conditions given in  \cite[Thm 1.3 and Table 2]{Orevkov} (these concern the eigenvalues of the three classes in question) we see that \[G\backslash\{1\}\subseteq \bigcup_{C,C'\in \mathscr{C}} CC'.\] It then follows again that $w_p(G)=2$.
	
	Lastly, assume that $q\equiv 2 \mod 3$. Provided $p\neq 3$ then the result follows by picking the same classes as above and considering their images $\tilde{C}$ in $PSU_3(q)$. If however  $p=3$ then\[\tilde{C}_2^{(0)}\notin \tilde{C}_6^{(0,\frac{q+1}{3},\frac{2(q+1)}{3})}\tilde{C}_6^{(0,\frac{q+1}{3},\frac{2(q+1)}{3})},\] by \cite[Cor. 1.8]{Orevkov}. But $\tilde{C}_6^{(0,\frac{q+1}{3},\frac{2(q+1)}{3})}$ is the unique class of order three elements and hence $w_3(G)\geq 3$ (in particular $w_3(g)\geq 3$ for $(J_2,J_1)$-unipotent elements $g\in \tilde{C}_6^{(0,\frac{q+1}{3},\frac{2(q+1)}{3})}$). But $G=(\tilde{C}_6^{(0,\frac{q+1}{3},\frac{2(q+1)}{3})})^3$ by \cite[Cor. 1.9]{Orevkov} and so the $3$-width is exactly three.
\end{proof}
\begin{rem*}
	The above result demonstrates interesting differences in the behaviour of $w_p(PSU_3(q))$ depending on whether $p$ is odd or even: if $q\equiv 2 \mod 3$ then $w_2(PSU_3(q))=3$ but otherwise $w_2(PSU_3(q))=4$ \cite[Lem. 6.9]{AJM18}.  Further families of groups where $w_2(G)\geq w_p(G)$ (for all odd $p$) are given in Proposition \ref{prop:ree} and Section \ref{sec:sporadic}. 
\end{rem*}

\begin{prop}\label{prop:PSL3}
	Let $G=PSL_3(q)$ and let $p$ be an odd prime dividing $\vert G \vert $. Then $w_p(g)=2$.
\end{prop}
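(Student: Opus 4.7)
The plan is to follow the template already laid down for Proposition \ref{prop:PSU3} essentially verbatim, swapping the role of $SU_3(q)$ for $SL_3(q)$. Orevkov's paper \cite{Orevkov} in fact treats $SL_3(q)$ in parallel with $SU_3(q)$, listing the eight families of conjugacy classes, the generic character values, and (in \cite[Thm.\ 1.3, Tab.\ 2]{Orevkov}) a complete criterion in terms of eigenvalues for when a product of two non-central classes fails to cover a given class. So the task reduces to: verify the small cases in GAP, then for each odd prime $p$ dividing $|G|$ exhibit a conjugacy class $C \subset I_p(G)$ (or a small collection of such classes) and check the eigenvalue conditions that force $G \setminus \{1\} \subseteq C^2$.

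First I would dispatch the small cases $q \leq 7$ directly from the character tables in \cite{GAP}. For $q \geq 8$, I would split into cases according to which factor of $|G| = q^3(q-1)(q+1)(q^2+q+1)/\gcd(3,q-1)$ is divisible by $p$. When $p \mid q$, take $C$ to be a class of unipotent elements of order $p$ in the Orevkov family analogous to $C_3$ used for $PSU_3$. When $p \mid q-1$, take a semisimple class from the maximally split torus of order $(q-1)^2/\gcd(3,q-1)$. When $p \mid q+1$, take a class whose order-$p$ elements arise from the torus of order $q^2-1$ embedded non-split in $SL_3$ (the analogue of the collection $\mathscr{C}$ in the $PSU_3$ argument, which gives $|\mathscr{C}| \geq 3$ once $p \geq 5$, with a separate small collection when $p=5$). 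When $p \mid q^2+q+1$, take a regular semisimple element from the Singer torus of order $(q^3-1)/((q-1)\gcd(3,q-1))$; here the class is typically already ``generic'' enough that $C^2 \supseteq G \setminus \{1\}$ by a direct reading of \cite[Tab.\ 2]{Orevkov}, and in fact is close in spirit to part (5) of Theorem \ref{thm:GM}.

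The point to emphasise, and the reason the statement is cleaner than Proposition \ref{prop:PSU3}, is that the exceptional behaviour at $p=3$ which forced $w_3(PSU_3(q))=3$ when $q \equiv 2 \pmod 3$ does \emph{not} occur for $PSL_3(q)$. In $PSU_3(q)$ the obstruction was that when $q \equiv 2 \pmod 3$ there is essentially only one class of order $3$ in $G$ (a unipotent one), and its square misses the other unipotent class by \cite[Cor.\ 1.8]{Orevkov}. In $PSL_3(q)$ the element count is arranged so that, for \emph{every} residue of $q$ mod $3$, at least one of the tori of order $q-1$, $q+1$, or $(q^2+q+1)/\gcd(3,q-1)$ contributes order-$3$ semisimple classes in addition to any unipotent order-$3$ classes, so we always have enough flexibility in picking $C$ to satisfy the eigenvalue conditions in \cite[Tab.\ 2]{Orevkov}.

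The main obstacle is the bookkeeping in tracking which of the eight Orevkov families yield order-$p$ elements in each divisibility regime, and in some borderline cases ($p = 3$ with $3 \mid q-1$, and $p = 5$ with $5 \mid q+1$) one has to pass to a product $CC'$ with $C, C' \subset I_p(G)$ distinct rather than a single square $C^2$, exactly as in the $\mathscr{C}$ construction for $PSU_3(q)$. Beyond this, no new idea is needed: the centre of $SL_3(q)$ always has order coprime to the ``problematic'' unipotent classes encountered for $PSU_3(q)$, so the descent from $SL_3(q)$ to $PSL_3(q)$ does not merge any of the classes used in the argument.
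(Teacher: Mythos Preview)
Your overall strategy is exactly the paper's: dispatch small $q$ with GAP, then for $q\geq 8$ appeal to Orevkov's eigenvalue criterion \cite[Thm.\ 1.3, Tab.\ 2]{Orevkov}, choosing classes in $I_p(G)$ from the $8^{\text{th}}$, $3^{\text{rd}}$, $7^{\text{th}}$ and $6^{\text{th}}$ families according as $p$ divides $q^2+q+1$, $q$, $q+1$ or $q-1$. On that level the proposal is correct and matches the paper.

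However, your explanation of why the exceptional behaviour at $p=3$ disappears is wrong, and this matters because you explicitly rely on it. You assert that for every residue of $q$ modulo $3$ there are enough distinct order-$3$ classes to pass to a product $CC'$ with $C\neq C'$, and you list ``$p=3$ with $3\mid q-1$'' among the cases where this is required. In fact, when $q\equiv 1\pmod 3$ the group $PSL_3(q)$ has a \emph{unique} conjugacy class of elements of order $3$, namely $\tilde{C}_6^{(0,(q-1)/3,2(q-1)/3)}$: the characteristic is prime to $3$, so there are no unipotent order-$3$ elements, and any semisimple element of order $3$ has all eigenvalues in $\ff$ (since $3\mid q-1$), hence is conjugate to $\mathrm{diag}(1,\omega,\omega^2)$. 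There is no second class $C'$ to pair with, and your proposed fix cannot be carried out.

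The actual reason the case succeeds---and the paper singles it out explicitly---is that the square of this single class already contains the unipotent class $\tilde{C}_2^{(0)}$, in contrast to $PSU_3(q)$ where \cite[Cor.\ 1.8]{Orevkov} shows the square misses $\tilde{C}_2^{(0)}$. The distinction is not about having more classes available, but about the structure constant $\kappa(\tilde{C}_6,\tilde{C}_6,\tilde{C}_2)$ being nonzero for $PSL_3$ and zero for $PSU_3$. You need to verify this directly from Orevkov's tables rather than invoke a flexibility that does not exist.
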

\begin{proof}
	Here the proof follows almost exactly as for $PSU_3(q)$ and so we omit the details. To summarise, for each $p$ dividing $q^2+q+1, q, q+1$ and $q-1$ we identify a collection of conjugacy classes $\mathscr{C}_p$ from the $8^{th},3^{rd}, 7^{th}$ and $6^{th}$ families respectively (see \cite{Orevkov} for notation) such that $\mathscr{C}_p\subset I_p(G)$. Examining the conditions given in \cite[Thm 1.3 and Table 2]{Orevkov} we see that $G\backslash\{1\}\subseteq \bigcup_{C,C'\in \mathscr{C}_p} CC'$ for each choice of $p$, and hence $w_p(G)=2$.
	Note that despite there being a unique class of order $3$  elements (namely $\tilde{C}_6^{(0,\frac{q-1}{3},\frac{2(q-1)}{3})}$) when $q\equiv 1 \mod 3$, the square of said class does contain the unipotent class $\tilde{C}_2^{(0)}$, unlike the unitary case.
\end{proof}
%\subsection{Small Ree groups}\label{sec:small-ree}
\begin{prop}\label{prop:ree}
	Let $G={}^{2}G_{2}(q^2), q^2 \geq 27$ and let $p$ be an odd prime dividing $\vert G \vert $. Then $w_p(G)=2$.
\end{prop}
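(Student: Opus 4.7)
The plan is to split by the prime $p$. For $p\ne 3$, Theorem \ref{thm:GM}(3) applies directly to any class $C\subseteq I_p(G)$ of order-$p$ elements: since $o(c)=p$ is coprime to $3$, we have $G\setminus\{1\}\subseteq C^2$, giving $w_p(G)\le 2$, and equality follows because $G$ contains non-trivial elements whose order is different from $p$.

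The remaining case is $p=3$, which Theorem \ref{thm:GM}(3) explicitly excludes, so a direct character-theoretic argument is required. Let $C\subseteq I_3(G)$ be the (unique) conjugacy class of long-root unipotent elements of order $3$, with representative $c$. By Theorem \ref{thm:cmc}, establishing $G\setminus\{1\}\subseteq C^2$ is equivalent to showing
\[
\kappa(C,C,g^G)\;=\;1\;+\;\sum_{\chi\in\irr\setminus\{1_G\}}\frac{\chi(c)^2\,\overline{\chi(g)}}{\chi(1)}\;\neq\;0
\]
for every representative $g$ of a non-identity class of $G$. I would use Ward's character table of ${}^{2}G_{2}(q^{2})$ (as organised in \cite{J94} and implemented in CHEVIE \cite{chevie}), which partitions $\irr$ into the trivial and Steinberg characters, a bounded number of further unipotent characters (including two cuspidal characters whose values involve $\sqrt{3}$), and a handful of parametric families indexed by regular characters of the various maximal tori. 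The values $\chi(c)$ at the long-root class are explicit low-degree polynomials (or algebraic integers) in $q$, so that $|\chi(c)|^{2}/\chi(1)$ is small, and summing the $O(q^{2})$ contributions from the families should give a total error bounded uniformly in $g$ by a constant strictly less than $1$ whenever $q^{2}\ge 27$.

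The main obstacle is a sharp handling of the cuspidal and discrete-series characters, where $|\chi(c)|$ may be a non-negligible positive power of $q$ compared with $\chi(1)^{1/2}$: here one cannot rely on crude majorisation and must either retain the explicit table values or combine several members of the same parametric family to exhibit the necessary cancellation. Any genuinely small residual cases (essentially only $q^{2}=27$ under our hypothesis) can be dispatched directly in GAP \cite{GAP}. Once $\kappa(C,C,g^{G})\neq 0$ is verified for all $g\neq 1$, Theorem \ref{thm:cmc} yields $w_{3}(G)\le 2$, and the matching lower bound is immediate since $G$ contains elements of order different from $3$.
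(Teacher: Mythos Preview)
Your treatment of $p\neq 3$ is correct and matches the paper exactly: Theorem~\ref{thm:GM}(3) applies to any class of order-$p$ elements and gives $G\setminus\{1\}\subseteq C^2$.

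For $p=3$, however, your plan has a genuine gap. There is not a unique conjugacy class of order-$3$ elements in ${}^{2}G_{2}(q^{2})$: the unipotent elements of order $3$ split into three classes, which the paper labels $C_{3}^{0}$, $C_{3}^{+}$, $C_{3}^{-}$ (with $C_{3}^{+}$ non-real and $|C_{G}(T)|=2q^{4}$ for $T\in C_{3}^{+}$). More importantly, your target statement $G\setminus\{1\}\subseteq C^{2}$ is \emph{false} for every choice of $C\subseteq I_{3}(G)$: the paper records explicitly that $C_{3}^{0}\nsubseteq (C_{3}^{\pm})^{2}$ and $C_{3}^{\pm}\nsubseteq (C_{3}^{0})^{2}$. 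So no amount of sharpened character estimates will make the structure constant $\kappa(C,C,g^{G})$ nonzero for all $g\neq 1$; for the relevant unipotent $g$ it genuinely vanishes. If you take $C$ to be the long-root class (this is $C_{3}^{0}$, the class with the large centraliser), the situation is in fact worse, since $(C_{3}^{0})^{2}$ misses both $C_{3}^{+}$ and $C_{3}^{-}$.

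The paper's fix is to aim for a weaker containment that still suffices: it shows $G\setminus I_{3}(G)\subseteq (C_{3}^{+})^{2}$, so that every element is either itself of order $3$ (width $1$) or lies in $(C_{3}^{+})^{2}$ (width $\leq 2$), whence $G=I_{3}(G)\cup (C_{3}^{+})^{2}$ and $w_{3}(G)=2$. Concretely, elements of orders $2$, $6$, $9$ are handled by a direct CHEVIE check, and the remaining non-unipotent elements (conjugate into $A_{i}\cup JA_{i}$ for the four cyclic Hall subgroups $A_{0},\ldots,A_{3}$) are treated via Ward's table: the key simplification is that $|\chi(T)|=1$ for the characters $\xi_{2},\eta_{t},\eta_{t}'$ that survive, and the contributions of $\xi_{5},\xi_{6}$ (and likewise $\xi_{7},\xi_{8}$) cancel in pairs, leaving an error strictly below $1$. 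Your outline would be repairable by (i) replacing the long-root class with $C_{3}^{+}$, and (ii) relaxing the goal to covering $G\setminus I_{3}(G)$ rather than $G\setminus\{1\}$.
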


\begin{proof}
By Theorem \ref{thm:GM} it suffices to consider the case $p=3$.  Now the unipotent elements of order 3 fall into three conjugacy classes which we denote $C_3^0,C_3^{+}$ and $C_3^-$ - we refer the reader to \cite{J94} for a comprehensive description of the classes in $G$ and notation.  In particular, $C_3^+$ is a class of non-real elements of order 3, with representative $T$ such that $\vert C_G(T)\vert=2q^4$.
Using CHEVIE \cite{chevie} it is straightforward to confirm that $(C_3^{+})^2$ contains all elements of orders $2,6$ and $9$. For each remaining element of $G\backslash I_3(G)$, a conjugate is contained in $A_i \cup JA_i$, where $J$ is a representative of the single class of involutions, and $A_i, 0\leq i \leq3$ are cyclic Hall subgroups of the following orders

\[
\vert A_0 \vert =\frac{1}{2}(q^2-1),\,\,\vert A_1 \vert =\frac{1}{4}(q^2+1),\,\, \vert A_2 \vert =q^2-\sqrt{3}q+1, \,\, \vert A_3 \vert =q^2+\sqrt{3}q+1.
\] 
We check that $G\backslash I_3(G) \subset (C_3^+)^2$ using Theorem \ref{thm:cmc} and the character tables in \cite{W66}: for example, consider a cyclic subgroup $A_1$. Non-identity elements in $A_1$ are denoted by $S^a$, and if $\chi \in \irr$ is such that $\chi(T)^2\chi(S^{-a})\neq 0$ then either $\chi \in \{1_G, \xi_2, \xi_5, \xi_6, \xi_7,\xi_8\}$ or $\chi$ is one of a $(q^2-3)/6$ characters denoted $\eta_t$ or $\eta_t'$ (see \cite{W66}).
Here 
\[
\xi_5(1)=\xi_7(1)=\frac{q}{2\sqrt{3}}(q^2-1)(q^2+\sqrt{3}q+1);
\]
\[
\xi_6(1)=\xi_8(1)=\frac{q}{2\sqrt{3}}(q^2-1)(q^2-\sqrt{3}q+1);
\]
\[
\xi_2(1)=(q^6-2q^4+2q^2-1)/(q^2-1);
\]
\[
\eta_t(1)=\eta_t'(1)=q^6-2q^4+2q^2-1.
\]
Note that $\xi_5(T)=\xi_6(T)$, but $\xi_5(S^{-a})=-\xi_6(S^{-a})$ and so the corresponding summands in the structure constant formula (see Theorem \ref{thm:cmc}) sum to zero. Similarly, the summands for $\xi_7$ and $\xi_8$ sum to zero giving
\[
\kappa(T^G,T^G,(S^a)^G)=1+\sum_{\chi \in \{\xi_2\}\cup\{\eta_{t}\cup \eta_t'\}}\frac{\chi(T)^2\overline{\chi(S^a)}}{\chi(1)}.
\] 
Furthermore, $\vert \xi_2(T) \vert = \vert \eta_{t}(T) \vert= \vert \eta_{t}'(T) \vert =1$, whereas $\vert \xi_2(S^a) \vert\leq 3$ and $\vert \eta_t(S^a) \vert= \vert \eta_t'(S^a) \vert \leq 6$. These bounds give
\[
\begin{array}{ccc}
\vert \sum_{\chi \in \{\xi_2\}\cup\{\eta_{t}\cup \eta_t'\}}\frac{\chi(T)^2\overline{\chi(S^a)}}{\chi(1)} \vert & \leq & \frac{3(q^2-1)}{q^6-2q^4+2q^2-1}+\frac{q^2-3}{q^6-2q^4+2q^2-1} \\
& = & \frac{4q^2-6}{q^6-2q^4+2q^2-1} <\frac{4}{q^4-2q^2}<1. \\
\end{array}
\]
It follows that $\kappa(T^G,T^G,(S^a)^G)\neq 0$ and so $A_1 \subset (C_3^+)^2.$ The remaining cases follow similarly, giving $G=I_3(G)\cup (C_3^+)^2$ and hence $w_3(G)=2$.
\end{proof}
\begin{rem*}
	We note that each class of order 3 elements is a genuine exception to Theorem \ref{thm:GM}, as $C_3^0 \nsubseteq (C_3^{\pm})^2$ and $C_3^{\pm} \nsubseteq (C_3^{0})^2$.
\end{rem*}

\textbf{Proof of Theorem \ref{thm:MainThmLieType}} If $G={}^{2}B_{2}(2^{2n+1}), n\geq 1$ then the result is immediate from Theorem \ref{thm:GM}. The remaining cases then follow from Propositions \ref{prop:PSL2 p width} -- \ref{prop:ree}.

\section{Sporadic groups}\label{sec:sporadic}
Lastly let's consider the sporadic finite simple groups. Again, we find that the width is typically smaller for odd primes than in the case of involutions: whereas $w_2(G)=2$ for sporadic $G$ only if $G\in \{J_1, J_2\}$ \cite[Thm. 1.6]{Suleiman08},  $w_p(G)=2$ for $p\geq 3$ in almost all cases.

\begin{thm}
	\label{thm: sporadic p width}Let $(G,p)$ be a pair consisting of
	a sporadic finite simple group $G$ and an odd prime $p$, dividing
	$|G|$. Then $w_p(G)=2$, unless it is one of the
	exceptions listed in Table \ref{table:SporadicExceptions2}, where $w_p(G)=3$.
	
	\begin{table}[h] \begin{center} \caption{Sporadic groups with $p$-width 3} \label{table:SporadicExceptions2}   \renewcommand{\arraystretch}{1.1} 
			\begin{tabular}{ccc}  \hline  Group & Prime & Width three Classes \\ \hline  $HS$ & 3 & 4A, 6A\\ $Co_{2}$ & 3 & 4A\\ $Co_{3}$ & 3 & 2A\\ $Fi_{22}$ & 3,5 & 2A, 4A, 6AB, 12D ${(p=3)}$; 2A ${(p=5)}$ \\ $Fi_{23}$ & 3,5 & 2A ${(p=3,5)}$ \\ $BM$ & 3 & 2A \\ \hline \end{tabular} \end{center} \end{table}
\end{thm}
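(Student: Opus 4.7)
The approach is computational, relying on the Frobenius structure constant formula (Theorem \ref{thm:cmc}) applied to the character tables of the sporadic simple groups. All required character tables are available in the ATLAS and through the GAP character table library, so the verification reduces to a finite (if extensive) check. Recall that to certify $D \subseteq C C'$ for conjugacy classes $C, C', D$ of $G$ it suffices to show $\kappa(C, C', D) \neq 0$, and analogously for three-fold products.

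For each pair $(G, p)$ I would first enumerate the conjugacy classes $C_1, \ldots, C_r \subseteq I_p(G)$ of order-$p$ elements, together with the full list $D_1, \ldots, D_s$ of non-trivial conjugacy classes of $G$. To establish $w_p(G) = 2$, it suffices, for every $D_j$, to exhibit a pair $(i,k)$ with $\kappa(C_i, C_k, D_j) \neq 0$, which forces $D_j \subseteq C_i C_k \subseteq I_p(G)^2$. A loop over triples $(D_j, C_i, C_k)$ evaluating the structure constant from the character table carries this out systematically. For the pairs $(G, p)$ not appearing in Table \ref{table:SporadicExceptions2}, this search succeeds at every $D_j$, yielding $w_p(G) = 2$.

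For the exceptional pairs listed in the table, the same search fails precisely at the indicated ``width three classes'' $D$: one verifies directly from the character table that $\kappa(C, C', D) = 0$ for every pair $C, C' \subseteq I_p(G)$, so that $D \not\subseteq I_p(G)^2$ and hence $w_p(G) \geq 3$. To complete the upper bound $w_p(G) \leq 3$, I would then repeat the computation with three classes: for each exceptional $D$, find $C_a, C_b, C_c \subseteq I_p(G)$ with $\kappa(C_a, C_b, C_c, D) \neq 0$. Combined with the two-class verification for the remaining classes of $G$, this gives $w_p(G) = 3$ in the exceptional cases.

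The main obstacle is computational rather than conceptual: for the large sporadic groups, in particular $BM$ and the Monster $M$, the character tables have several hundred classes and irreducible characters, so a naive sweep over pairs or triples of classes is substantial. Careful use of GAP's class multiplication coefficient routines keeps this tractable, and it helps that most exceptional entries in Table \ref{table:SporadicExceptions2} concentrate on small classes such as $2A$, $4A$, $6A$ with small primes $p = 3, 5$, so the structure constants needed for the lower bound live in a small region of the character table. For $M$ itself (which is not exceptional) the task is to confirm the two-class search succeeds at every class for every odd prime $p \mid |M|$, which is the most demanding single verification but remains a routine character-table computation.
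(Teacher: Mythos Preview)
Your proposal is correct and matches the paper's own proof essentially verbatim: the paper also computes structure constants from the GAP character tables via Theorem~\ref{thm:cmc}, noting as a practical shortcut that in most cases one already has $G\setminus\{1\}\subseteq C^2$ for $C$ one of the largest classes in $I_p(G)$. The exceptional classes listed in Table~\ref{table:SporadicExceptions2} are exactly those for which the two-class search fails, and the three-class check then finishes the argument.
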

\begin{proof}
	The character tables of the sporadic simple groups are available in GAP \cite{GAP} and it is easy to calculate the relevant structure constants using Theorem \ref{thm:cmc}. We remark that in most cases $G\backslash  \{1\}\subseteq C^2$ where $C$ is (one of) the largest conjugacy classes in $I_p(G)$. Exactly which classes are not contained in $I_p(G)^2$ are given in Table \ref{table:SporadicExceptions2}.
\end{proof}

\section{Lie type groups of larger rank} \label{sec:final}
In Section \ref{sec:LieType} we saw that typically the $p$-width is two, for all odd prime divisors of a given low-rank group of Lie type. In particular, the upper bound in Theorem \ref{thm:MainThmLieType} is only reached when $p=3$. However we do not necessarily expect that this will remain true for groups of arbitrarily large rank, as the following example illustrates.

\begin{prop}\label{prop:symplectic}
	Let $p \neq l$ be primes such that $p$ is odd and $l$ is a primitive root modulo $p$. Then $G\in \{SL_{(p-1)}(l),Sp_{(p-1)}(l)\}$ has $p$-width at least 3.
\end{prop}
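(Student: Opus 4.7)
The plan is to exhibit a single element of $G$ whose $p$-width is at least three, via a parity obstruction on orders. The natural test element is the central involution $z=-I$; since $p-1$ is even we have $\det(-I)=1$, and $-I$ preserves every bilinear form, so $z\in SL_{p-1}(l)\cap Sp_{p-1}(l)$. The argument sketched below is complete when $l$ is odd (so that $z\neq I$); the case $l=2$ requires a supplementary argument.

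Suppose $l$ is odd. Then $z$ is non-identity, central, and of order $2\neq p$, so $z\neq 1$ and $z\notin I_p(G)$; it remains to show $z\notin I_p(G)^{2}$. Assume for contradiction that $z=g_1g_2$ for some $g_1,g_2\in I_p(G)$. Since $z$ is central and $p$ is odd,
\[
g_2^{\,p}=(g_1^{-1}z)^{p}=g_1^{-p}z^{p}=(-I)^{p}=-I\neq I,
\]
contradicting $g_2^{\,p}=I$. Hence $z\notin I_p(G)^{2}$, so $w_p(-I)\geq 3$ and $w_p(G)\geq 3$. Note that this argument uses only that $-I$ is a non-trivial central involution; the primitive-root hypothesis does not intervene here, but ensures (via Artin's conjecture) that one obtains families of such groups of unbounded rank, which is the point of the example.

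The case $l=2$ is not covered by this argument because $Z(G)$ is trivial. Here the full force of the hypothesis that $l$ is a primitive root modulo $p$ becomes essential: it implies that the cyclotomic polynomial $\Phi_p(x)=1+x+\cdots+x^{p-1}$ is irreducible over $\mathbb{F}_l$, so every element of $I_p(G)$ is regular semisimple with characteristic polynomial $\Phi_p(x)$, and $I_p(G)$ forms a single $G$-conjugacy class $g^G$. To show $w_p(G)\geq 3$ one would then apply Theorem~\ref{thm:cmc}, seeking an element $h\in G$ on which the structure constant $\kappa(g^G,g^G,h^G)$ vanishes. This character-theoretic step is the main technical obstacle: with the centre-of-group trick unavailable, one seems to need an explicit bound on character sums indexed by the Deligne--Lusztig characters attached to the Coxeter torus containing $g$.
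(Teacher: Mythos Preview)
Your argument for odd $l$ is correct and pleasantly elementary: it really does use nothing beyond the fact that $-I$ is a nontrivial central involution and $p$ is odd, so it establishes $w_p(G)\geq 3$ for $SL_{2m}(l)$ and $Sp_{2m}(l)$ whenever $l$ is odd, $p$ is an odd prime dividing $|G|$, and $p\nmid |Z(G)|$. In particular it proves strictly more than the proposition asserts in that regime, and with far less machinery than the paper employs.

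The gap, however, is genuine and not peripheral. The case $l=2$ is singled out in the paper immediately after the proposition (``for example $l=2$ is primitive modulo $5,11,13,19,\dots$'') and again in the remark following Corollary~\ref{cor:artin}, precisely because $Z(SL_{p-1}(2))=Z(Sp_{p-1}(2))=1$ and these are the \emph{simple} examples the paper is after. Your centre-based obstruction evaporates exactly where the result is most wanted, and the character-theoretic completion you sketch is left as a ``main technical obstacle'' rather than carried out.

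The paper's proof handles all $l$ uniformly by a quite different mechanism. It takes a transvection $t$ (of order $l$) as the test element. The primitive-root hypothesis is used twice, and essentially: first to see that $I_p(G)$ is a single conjugacy class $C$ with $C=C^{-1}$, so that a putative factorisation $t=x_1x_2$ with $x_i\in C$ can be rewritten as a commutator $t=[x_1,y]$; and second to know that $x_1$ acts irreducibly on $\mathbb{F}_l^{\,p-1}$. Then a theorem of Guralnick and Malle \cite[Thm.~9]{GM2} forces $H=\langle x_1,y\rangle$ into a Borel subgroup, whence $H'$ is a nontrivial $l$-group and so has nonzero fixed space on the natural module. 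This fixed space is $H$-invariant, contradicting the irreducibility of $x_1\in H$. No character computation is involved.

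So: your proposal is a valid and more elementary proof of a more general statement when $l$ is odd, but it does not prove the proposition as stated, and the missing case is the one the paper actually needs. To complete it you would either have to carry out the structure-constant vanishing you allude to, or adopt the paper's transvection/commutator/irreducibility argument, which works independently of the parity of $l$.
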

\begin{proof}
	Let $\Lambda$ denote the set of non-trivial $p$th roots of unity in $\ff[l^{(p-1)}]$ and let $\sigma: \Lambda \rightarrow \Lambda$ be the permutation $\sigma: \lambda\mapsto\lambda^l$.
	Recall that $l$ primitive modulo $p$ implies that $p$ divides $l^{p-1}-1$ but not $l^k-1$ for any $k<p-1$. It follows that $\sigma$ is transitive on $\Lambda=\Lambda^{-1}$ and so there exists a unique $G$-class, denoted $C$, of elements of order $p$ \cite[Sec. 3.2--3.4]{bg}. In particular $C=C^{-1}$ and elements $x \in C$ act irreducibly on $V=\ff[l]^{p-1}$.  
	Now consider a transvection $t \in G$. As transvections have order $l$, it suffices for the proposition to show that $w_p(t)\neq 2$. 
	
	Assuming that $w_p(t)=2$ for a contradiction, it follows that $t=x_1x_2$ for some $x_i \in C$. As $x_2$ is conjugate to $x_1^{-1}$ we may write this as a commutator $t=[x_1,y]$ for some $y \in G$, and applying \cite[Thm. 9]{GM2}  we see that $H=\langle x_1,y \rangle $ is contained in a Borel subgroup of $G$.
	It follows that the derived subgroup $H'$ is unipotent and in particular, $H'$ is a non-trivial $l$-group acting naturally on $V$. Considering the fixed points of this action, it is an easy consequence of the orbit-stabiliser theorem that $\vert V^{H'} \vert \equiv \vert V \vert \equiv 0 \mod l.$ But of course $V^{H'} $ contains  the zero vector, so in fact  $V^{H'} $ is a non-trivial subspace of $V$, which is invariant under $H$. This is clearly a contradiction as $H$ contains irreducible elements from $C$.
 \end{proof}

	It is clear that Proposition \ref{prop:symplectic} provides new examples of groups with $p$-width 3, for odd primes $p \neq 3$: for example $l=2$ is primitive modulo $5,11,13,19\dots$ 
	Furthermore, Proposition \ref{prop:symplectic} provides an interesting link between the $p$-width of finite groups and the famed conjecture of Artin that any integer $l$ that is neither $\pm 1$ nor a perfect square, is a primitive root modulo infinitely many primes \cite{hooley}:
	
	\begin{cor} \label{cor:artin}
		Assume that Artin's conjecture is true. Then for any prime $l$, there exists groups of Lie type $G(l)$ of arbitrarily large rank and characteristic $l$, such that $w_p(G)\geq 3$ for at least one prime divisor $p$ of $\vert G \vert$.
	\end{cor}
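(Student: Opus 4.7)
The plan is to deduce the corollary essentially immediately by combining Artin's conjecture with Proposition \ref{prop:symplectic}. First I would fix a prime $l$ and note that since $l$ is prime it is neither $\pm 1$ nor a perfect square, so (assuming Artin's conjecture) there exists an infinite set $\mathcal{P}_l$ of primes $p$ for which $l$ is a primitive root modulo $p$. In particular, for any $N$ one can find $p \in \mathcal{P}_l$ with $p > N$.

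Next I would verify that each such $p$ satisfies the hypotheses of Proposition \ref{prop:symplectic}. The condition $p \neq l$ is automatic: if $p = l$ then $l \equiv 0 \imod{p}$, contradicting the fact that $l$ generates $(\mathbb{Z}/p\mathbb{Z})^\times$. The only other condition to check is that $p$ is odd, but at most one even prime (namely $p=2$) can belong to $\mathcal{P}_l$, so discarding it leaves infinitely many admissible $p$.

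Then for each such $p$ I would take $G := SL_{p-1}(l)$ (or equally $Sp_{p-1}(l)$), which is a group of Lie type of characteristic $l$ and rank $p-2$. By Fermat's little theorem, $p \mid l^{p-1}-1$ and hence $p \mid |G|$, so $p$ is a valid prime divisor of $|G|$ in the sense required by the statement. Proposition \ref{prop:symplectic} then yields $w_p(G) \geq 3$, and as $p$ ranges over the infinite set $\mathcal{P}_l$ the ranks $p-2$ grow without bound, producing the required family of groups.

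In terms of difficulty, there is really no genuine obstacle beyond assembling the two ingredients: the substantive work was carried out in the proof of Proposition \ref{prop:symplectic} (the transvection plus Borel-subgroup argument), and the deep hypothesis is Artin's conjecture itself. The only minor bookkeeping is the three easy verifications $p \neq l$, $p$ odd, and $p \mid |G|$ noted above.
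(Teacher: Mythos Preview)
Your proposal is correct and follows exactly the same route as the paper: fix $l$, invoke Artin's conjecture to obtain infinitely many primes $p$ with $l$ primitive modulo $p$, and apply Proposition~\ref{prop:symplectic} to $SL_{p-1}(l)$ or $Sp_{p-1}(l)$. The paper's proof is terser and omits the easy checks ($p\neq l$, $p$ odd, $p\mid |G|$) that you spell out, but the argument is identical.
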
 
\begin{proof}
	Fix $l$ and let $\{p_i\}_{i\in \mathbb{N}}$ be the primes guaranteed by Artin's conjecture. Then for $G \in \{SL_{(p_i-1)}(l), Sp_{(p_i-1)}(l)\}_{i\in \mathbb{N}},$ \[w_{p_i}(G)\geq 3\] by Proposition \ref{prop:symplectic}.
\end{proof}
\begin{rem*}
	In particular, Artin's conjecture asserts that $l=2$ is primitive modulo infinitely many primes, and so Corollary \ref{cor:artin} yields two families of simple groups of width at least 3.
\end{rem*}

\textbf{Acknowledgements:} The author is very grateful to Tim Burness for providing detailed feedback on a previous version of this paper, and to Bob Guralnick for sharing his expertise in a number of helpful conversations.

\end{document}